\theoremstyle{plain}
\newtheorem{thm}{Theorem}
\newtheorem{lemma}{Lemma}
\newtheorem{corollary}{Corollary}
\newtheorem{prop}{Proposition}
\newtheorem{example}{Example}
\theoremstyle{definition}
\newtheorem{defn}{Definition}
\theoremstyle{remark}
\newtheorem{remark}{Remark}
\newcommand{\Z}{\ensuremath{\mathbb{Z}}}   
\newcommand{\N}{\ensuremath{\mathbb{N}}}
\newcommand{\Q}{\ensuremath{\mathbb{Q}}}
\newcommand{\Ker}{\operatorname{{ \it ker}}}
\newcommand{\Rad}{\operatorname{{\it rad}}}
\newcommand{\Soc}{\operatorname{{\it soc}}}
\newcommand{\ds}{\subseteq ^{\oplus}}
\numberwithin{equation}{section} 
\begin{document}

\title[virtually regular modules]{VIRTUALLY REGULAR MODULES}

\author{ENG\.{I}N B\"uy\"uka\c{s}{\i}k}

\address{Izmir Institute of Technology \\ Department of Mathematics\\ 35430 \\ Urla, \.{I}zmir\\ Turkey}

\email{enginbuyukasik@iyte.edu.tr}

\author{\"{O}zlem Irmak Demir}

\address{Izmir Institute of Technology \\ Department of Mathematics\\ 35430 \\ Urla, \.{I}zmir\\ Turkey}
\email{ozlemirmak@iyte.edu.tr}

\begin{abstract}   We call a right module $M$ \textit{(strongly) virtually regular} if every (finitely generated) cyclic submodule is isomorphic to a direct summand.  $M$ is said to be \textit{completely virtually regular} if every submodule is virtually regular.  In this paper, characterizations and some closure properties of the aforementioned modules are given.  Several structure results are obtained over commutative rings. In particular, the structures of finitely presented (strongly) virtually regular modules and completely virtually regular modules  are fully determined over valuation domains.  Namely, for a valuation domain $R$ with the unique maximal ideal $P$, we show that finitely presented (strongly) virtually regular modules are free if and only if $P$ is not principal; and that $P=Rp$ is principal if and only if finitely presented virtually regular modules are of the form $$R^n \oplus (\frac{R}{Rp})^{n_1} \oplus (\frac{R}{Rp^2})^{n_2} \oplus \cdots \oplus (\frac{R}{Rp^k})^{n_k}$$ for nonnegative integers 
$n,\,k,\,n_1,\,n_2,\cdots ,n_k.$  
Similarly, we prove that $P=Rp$ is principal if and only if  finitely presented   strongly virtually regular modules are of the form  $ R^n  \oplus (\frac{R}{Rp})^{m}$, where $m,n$ are nonnegative integers.

 We also obtain that, $R$ admits   a  nonzero finitely presented completely virtually regular module $M$ if and only if $P=Rp$ is principal. Moreover, for a finitely presented $R$-module $M$, we prove that: $(i)$ if $R$ is not a DVR,  then $M$ is completely virtually regular if and only if  $M \cong (\frac{R}{Rp})^{m}$; and $(ii)$ if $R$ is a DVR, then $M$ is completely virtually regular if and only if $M\cong R^n \oplus (\frac{R}{Rp})^{m}.$ Finally, we obtain a characterization of finitely generated virtually regular modules over the ring of integers. 

\end{abstract}

\subjclass[2010]{16D50, 16D60, 18G25}

\keywords{Regular rings; strongly regular modules; virtually regular modules; valuation domains.}

\maketitle

\section{Introduction}

Von Neumann Regular rings and their generalizations to modules are extensively investigated in the literature.  A ring $R$   is said to be \textit{(von Neumann) regular} if for each $a \in R$, there is  $b \in R$ such that $a=aba$.   A well-known characterization of regular rings is that, each principal left or right ideal is generated by an idempotent. That is, $R$ is regular if and only if every principal left and right ideal is a direct summand.  Generalizations of regularity to modules are studied by many authors (see, \cite{Nicholson}, \cite{Ramamurti:Onfinitelyinjectivemodules}, \cite{ware} and \cite{zelmanov}). In \cite{ware}, regularity  extended to projective modules; namely, a projective right module is said to be \textit{regular} if every cyclic submodule is a direct summand.  
More generally, in \cite{Ramamurti:Onfinitelyinjectivemodules}, a right $R$-module $M$ is said to be \textit{strongly regular} if every cyclic submodule of $M$ is a direct summand. It is easy to see that, $M$ is strongly regular if and only if every finitely generated submodule is a direct summand; and that submodules of strongly regular modules are also strongly regular (see, Theorem \ref{strreg}). 

Recently, many well-known and significant structures of rings and modules, such as semisimplicity, and being uniserial, have been studied from a different perspective.  In \cite{vss}, a right $R$-module $M$ is said to be \textit{virtually semisimple} if every submodule of $M$ is isomorphic to a direct summand of $M$.   The authors obtained a generalization of the Wedderburn-Artin Theorem by  using virtually semisimplicity (see, \cite{vss}, \cite{wa}). Accordingly, structure of virtually semisimple modules over
commutative rings examined in \cite{svss}.  By a similar approach, virtually uniserial modules are investigated and  studied in \cite{vuniserial}. These new approach and generalizations lead to new and interesting characterizations of some well-known rings in terms of these concepts.

  


Following these recent new ideas, in this paper,  we investigate virtually regular, completely virtually regular and strongly virtually regular modules which arose from equivalent characterizations of strongly regular modules (see Theorem \ref{strreg}). Namely, we call a right module $M$ \textit{(strongly) virtually regular} if every cyclic (finitely generated) submodule of $M$ is isomorphic to a direct summand of $M$.  $M$ is said be \textit{completely virtually regular} if every submodule of $M$ is virtually regular. 
We have the following hierarchy and implications for the aforementioned modules: 


\begin{center}
	\begin{displaymath}
	\xymatrix{& strongly\,\, regular \ar[dl]  \ar[dr]  & \\
		completely\,\,virtually\,\,regular \ar[r]  &  virtually\,\, regular   & strongly \,\,virtually\,\, regular \ar[l] \\
		&   & virtually\,\,semisimple   \ar[u] }  
	\end{displaymath}
\end{center}

We prove several module theoretic characterizations of (strongly) virtually regular and completely virtually regular modules. Many examples are provided to distinguish these classes of modules from each other and several related concepts.  We obtain some structure theorems for virtually regular and completely virtually regular modules, in particular over valuation domains and over the ring of integers.

The paper is organized as follows.

In section 2, we provide examples to distinguish the notions of virtually semisimple, virtually regular, completely virtually regular and strongly virtually regular modules.  For a ring $R$, we prove that $R$ is right virtually regular if and only if $R$ is right pp-ring, and $R$ is strongly virtually regular if and only if $R$ is right  pp-ring and right B\'{e}zout ring. A domain $R$ is right completely virtually regular if and only if $R$ is a right PID. All right $R$-modules are virtually regular if and only if injective right $R$-modules are virtually regular if and only if $R$ is semisimple Artinian. The rings whose cyclic right modules are virtually regular are right $V$-rings (i.e. simple right modules are injective). A right PCI domain is a nonsemisimple ring whose cyclic right modules are virtually regular.

In section 3, we study virtually regular modules over commutative rings. We show that, a torsion-free module $M$ over a commutative domain $R$ is virtually regular if and only if $M\cong N\oplus R$ for some $R$-module $N$ (Proposition \ref{torsion-freecvssoverdomain}).  If $M$ is virtually regular over a commutative domain $R$, then $T(M)$ and $\frac{M}{T(M)}$ are virtually regular, and the converse implication holds if $R$ is almost maximal Prüfer (Theorem \ref{prop:torsiontorsionfreepart}).  An indecomposable $R$-module $A$ is virtually regular if and only if $A\cong \frac{R}{P}$ for some prime ideal $P$ of $R$. An indecomposable  $R$-module $B$ is completely virtually regular if and only if $B \cong \frac{R}{Q}$ is a PID.   A torsion module over a Dedekind domain is completely virtually regular if and only if it is semisimple (Proposition \ref{prop:torsionoverdedekinddomain}). As a byproduct, the structure of completely virtually regular modules over Dedekind domain is obtained (Corollary \ref{cor:cvroverDD}).

The focus of our study in section 4 is on the structure of finitely presented (strongly) virtually regular and completely virtually regular modules over valuation domains. We obtain  complete characterization of finitely presented  (strongly) virtually regular modules and completely virtually regular modules over valuation domains.  Let $R$ be valuation domain with the unique maximal ideal $P$. We prove that,  finitely presented (strongly) virtually regular modules are free if and only if $P$ is not principal.
If $P=Rp$ is principal, then a finitely presented $R$-module $M$ is  virtually regular if and only if

 $$M\cong R^n \oplus (\frac{R}{Rp})^{n_1} \oplus (\frac{R}{Rp^2})^{n_2} \oplus \cdots \oplus (\frac{R}{Rp^k})^{n_k}$$for nonnegative integers $n,\,k,\,n_1,\,n_2,\cdots ,n_k$ (Theorem \ref{thm:fpvirtuallyregularovervd}). Similarly, $R$ admits a non free finitely presented strongly virtually regular module $M$ if and only if $P=Rp$ is principal and  $M\cong R^n \oplus (\frac{R}{Rp})^{m}$ for nonnegative integers $n,\,m$ not both zero (Proposition \ref{prop:svrovervd}).

The case for finitely presented completely virtually regular modules is slightly different over valuation domains.  
We prove that, $R$ admits a (nonzero) finitely presented completely virtually regular module if and only if $P=Rp$ is principal.  Moreover, for a finitely presented $R$-module $M$, we prove that: $(i)$ if $R$ is not a DVR,  then $M$ is completely virtually regular if and only if  $M \cong (\frac{R}{Rp})^{m}$; and $(ii)$ if $R$ is a DVR, then $M$ is completely virtually regular if and only if $M\cong R^n \oplus (\frac{R}{Rp})^{m},$ where $n,\,m$ are nonnegative integers (Proposition \ref{prop:cvrovervaluationdomains}).

In section 5, we characterize finitely generated virtually regular and completely virtually regular modules over the ring of integers (Theorem \ref{cvfag}).  The notions virtually semisimple, completely virtually regular and strongly virtually regular coincide for finitely generated abelian groups (Corollary \ref{corollary:vss-cvr-svr}).

Throughout this paper, $R$ is a ring with unity and modules are unital right $R$-modules. As usual, we denote the radical, the socle and the singular submodule of a module $M$ by $\Rad(M)$, $\Soc(M)$ and $Z(M)$, respectively.   We write $N \subseteq M$ if $N$ is a submodule of $M$,  $N \ds M$   if $N$ is a direct summand of $M$.  We also write $M^{(I)}$ for a direct sum of $I$-copies of $M$. The torsion submodule of a module $M$ over a commutative domain is denoted by $T(M)$. We refer to \cite{AF}, \cite{FuchsAndSalce:ModulesOverNonNoetherianDomains} and \cite{Lam:LecturesOnModulesAndRings}  for all the undefined notions in this paper.

\section{Virtually Regular modules}

Recall that a ring $R$ is regular if and only if every principal right ideal of $R$ is a direct summand. Ramamurthi et al. in \cite[page 246]{Ramamurti:Onfinitelyinjectivemodules} generalized this notion to modules and investigated strongly regular modules. An $R$-module $M$ is said to be \textit{strongly regular} if every finitely generated submodule of $M$ is a direct summand.

We recall the following known characterization of strongly regular modules for completeness. 

\begin{thm}\label{strreg}
Let $M$ be an $R$-module. The following statements are equivalent.
\begin{enumerate}
  \item $M$ is strongly regular.
  \item Every cyclic submodule of $M$ is a direct summand.
  \item For every submodule $N \subseteq M$, every cyclic submodule of $N$ is a direct summand of $N.$
  \item Every finitely generated submodule of $M$ is a direct summand.
\end{enumerate}
\end{thm}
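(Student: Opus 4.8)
The plan is to establish the equivalences as a short cycle of implications, observing first that $(1)\Leftrightarrow(4)$ is immediate from the definition of strong regularity (condition $(4)$ is precisely the defining property), so the real content is to relate the cyclic condition $(2)$ to the finitely generated conditions $(3)$ and $(4)$.

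First I would prove $(2)\Rightarrow(3)$ by a direct modular-law argument. Given a submodule $N\subseteq M$ and a cyclic submodule $C=xR$ of $N$ with $x\in N$, note that $C$ is also a cyclic submodule of $M$, so by $(2)$ we may write $M=C\oplus C'$. Since $C\subseteq N$, the modular law yields $N=N\cap(C+C')=C+(N\cap C')$, and as $C\cap(N\cap C')\subseteq C\cap C'=0$ this is a direct sum $N=C\oplus(N\cap C')$. Hence $C$ is a direct summand of $N$. The reverse implication $(3)\Rightarrow(2)$ is obtained by specializing $N=M$.

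The heart of the argument is $(2)\Rightarrow(4)$, which I would prove by induction on the number of generators of a finitely generated submodule $N=x_1R+\cdots+x_nR$. The case $n=1$ is exactly $(2)$. For the inductive step, set $N'=x_1R+\cdots+x_{n-1}R$; by the induction hypothesis $N'\ds M$, say $M=N'\oplus L$. Writing $x_n=a+b$ with $a\in N'$ and $b\in L$, we obtain $N=N'+bR$. Now $bR$ is a cyclic submodule of $L$, and since the cyclic-summand property passes to submodules by the already-established implication $(2)\Rightarrow(3)$, $bR$ is a direct summand of $L$, say $L=bR\oplus L'$. Then $M=N'\oplus bR\oplus L'$ and, because $N'\cap bR\subseteq N'\cap L=0$, we get $N=N'\oplus bR\ds M$, completing the induction. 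Finally $(4)\Rightarrow(2)$ is trivial, as a cyclic submodule is finitely generated.

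I expect the main obstacle to be arranging the induction so that the decomposition $M=N'\oplus L$ supplied by the inductive hypothesis can be refined using the cyclic hypothesis applied \emph{inside} $L$; this is exactly where the implication $(2)\Rightarrow(3)$ is essential, since it guarantees the cyclic-summand property at the level of the submodule $L$ rather than only of $M$. The modular-law computation is the technical tool that legitimizes each refinement, so I would prove $(2)\Rightarrow(3)$ before attempting the induction.
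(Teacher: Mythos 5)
Your proof is correct and follows essentially the same strategy as the paper's: $(2)\Rightarrow(3)$ via the modular law, and $(2)\Rightarrow(4)$ by induction on the number of generators. The only difference is cosmetic: the paper splits off the cyclic piece generated by the first generator and applies the induction hypothesis to the projected remainder $A\cap B$, whereas you apply the induction hypothesis to the first $n-1$ generators and then split off the component of the last generator inside the complement $L$; both refinements are legitimized by the same modular-law/restriction-to-submodules step.
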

\begin{proof}
 $ (1) \Rightarrow (2)$, $(3) \Rightarrow (2)$ and $(1) \iff (4)$ are clear. 
 
$ (2) \Rightarrow (3)$ Let $N$ be a submodule of $M$, and $C$ be a cyclic submodule of $N$. Then $C\ds M$  by $(2)$, and so $C \ds N$ by the modular law.

(2) $\Rightarrow$ (4)  Let $A$ be a submodule of $M$ which is generated by $a_1, a_2, \cdots, a_n$. By induction on $n$, we will show that $A \ds M$. If $n = 1$, then $A$ is cyclic, and so  $ A \ds M$ by $(2)$. Suppose that every submodule of $M$ generated by less than $n$ elements is a direct summand. Then the cyclic submodule $Ra_1$ is direct summand of $M$, i.e., $M = Ra_1 \oplus B$ for some $B \subseteq M$. By modular law, $A = Ra_1 \oplus (A \cap B).$  Then $A \cap B$ is generated by $p(a_2), p(a_3), \cdots, p(a_n)$  where $p : A \rightarrow A \cap B$ is the usual projection. By the hypothesis, $A \cap B \ds M$. Thus, $A \cap B \ds B$, i.e., $B = (A \cap B) \oplus C$ for some $C \subseteq B$. Therefore $$M = Ra_1 \oplus B = Ra_1 \oplus (A \cap B) \oplus C = A \oplus C.$$ So $A \ds M$.
\end{proof}

As pointed out, it is natural to ask whether the virtual version of Theorem \ref{strreg} is true, that is, if every cyclic submodule is isomorphic to a direct summand, does it imply that every finitely generated submodule is a direct summand? The following example shows that this is not the case.

\begin{example}\label{examplecvss}
Consider the polynomial ring $\mathbb{Z}[X]$ in one variable $X$ with integer coefficients. Since $\mathbb{Z}[X]$ is an integral domain, every principal ideal of $\mathbb{Z}[X]$ is isomorphic to $\mathbb{Z}[X]$.  On the other hand, it is well-known that $\Z[X]$ is Noetherian ring which not a PID, for example the ideal $I=(2,\, X)$ is finitely generated but not principal.   Then $I$ is not isomorphic to $\Z[X]$, and so $I$ is not isomorphic to a direct summand of $\Z[X]$.
\end{example}


 Theorem \ref{strreg} and Example \ref{examplecvss} motivate the following definitions.
 
\begin{defn} Let $M$ be a right $R$-module. 
\begin{enumerate}
  \item[(1)] $M$ is said to be {\it virtually regular} if every cyclic submodule of $M$ is isomorphic to a direct summand of $M$.
   \item[(2)] $M$ is  {\it completely virtually regular} if every  submodule of $M$ is virtually regular.
    \item[(3)] $M$ is {\it strongly virtually regular} if every finitely generated  submodule of $M$ is isomorphic to a direct summand of $M$.

\end{enumerate}

\end{defn}

The following examples distinguish the notions of virtually semisimple, virtually regular, strongly virtually regular and completely virtually regular modules.


\begin{example}
\begin{enumerate}
  \item Every virtually semisimple module is (strongly) virtually regular.
  \item Every domain is right and left virtually regular. Because for every $0\neq a \in R$, $aR\cong R$ and $Ra \cong R.$
  \item The $\Z$-module $M=\Z_2 \oplus \Z_4$ is virtually regular but it is not virtually semisimple: Cyclic submodules of $M$ are isomorphic to $0,\,\Z_2$ or $\Z_4$, and so $M$ is virtually regular. On the other hand, the submodule $\Z_2 \oplus 2\Z_4$ is not isomorphic to a direct summand of $M$, and so $M$ is not strongly virtually regular.
 

  \item The $\Z$-module $M = \Q \oplus \Z^{(\N)}$ is strongly virtually regular but it is not virtually semisimple: Let $C$ be a finitely generated  submodule of $M$. Then $C\cong \Z^n$ for some nonnegative integer $n$. Clearly $M$ has a direct summand isomorphic to $C$. Thus $M$ is strongly virtually regular. On the other hand, the submodule $N$ of  $\Q$ generated by $S=\{\frac{1}{2^k} \mid  k \in \Z ^+ \}$ is not finitely generated and it is not a free module. Now, $N$ can be embedded in $M$, and $M$ has no any direct summand isomorphic to $N$. Thus $M$ is not virtually semisimple.

\end{enumerate}
\end{example}

A ring $R$ is called \textit{right $pp$-ring} if every principal right ideal of $R$ is projective. $R$ is said to be \textit{right B\'{e}zout ring} if every finitely generated right ideal is principal. 

The following result address the question that when the ring is (strongly) virtually regular and completely virtually regular as right module over itself.

\begin{prop}\label{prop:Rvirtuallyregular} For a ring $R$, the following are hold.
\begin{enumerate}
\item[(1)] $R$ is right virtually regular if and only if $R$ is right pp-ring.
\item[(2)] $R$ is strongly virtually regular if and only if $R$ is  right  pp-ring and right B\'{e}zout ring.
\item[(3)] If $R$ is a domain, then $R$ is right completely virtually regular if and only if $R$ is a right PID.
\end{enumerate}
\end{prop}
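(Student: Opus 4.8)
The plan is to translate each module-theoretic condition into the language of right ideals and then exploit splitting of short exact sequences. I will use throughout that the cyclic submodules of $R_R$ are exactly the principal right ideals $aR$, that the finitely generated submodules of $R_R$ are exactly the finitely generated right ideals, and that the direct summands of $R_R$ are exactly the right ideals of the form $eR$ with $e^2=e$; in particular every direct summand of $R_R$ is both cyclic and projective.

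For (1), suppose first that $R$ is right virtually regular. Given $a\in R$, the principal right ideal $aR$ is a cyclic submodule, hence isomorphic to a direct summand of $R_R$; being a direct summand of the free module $R$, that summand is projective, so $aR$ is projective and $R$ is a right pp-ring. Conversely, if $R$ is a right pp-ring, then for each $a\in R$ the right-linear surjection $R\to aR$, $r\mapsto ar$, has kernel $\operatorname{r.ann}(a)$ and fits in a short exact sequence $0\to\operatorname{r.ann}(a)\to R\to aR\to 0$. Since $aR$ is projective this sequence splits, so $R\cong aR\oplus\operatorname{r.ann}(a)$ and $aR$ is isomorphic to a direct summand of $R_R$. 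As every cyclic submodule of $R_R$ has this form, $R$ is right virtually regular.

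For (2), I would combine (1) with a reduction from finitely generated to principal ideals. If $R$ is strongly virtually regular, then every finitely generated right ideal $I$ is isomorphic to a direct summand of $R_R$; since such a summand is cyclic, $I$ is cyclic, i.e. principal, so $R$ is right B\'{e}zout, and applying the hypothesis to principal ideals gives, as in (1), that $R$ is a right pp-ring. Conversely, if $R$ is right pp and right B\'{e}zout, then any finitely generated right ideal is principal by B\'{e}zout, hence projective by pp, hence isomorphic to a direct summand by the splitting argument of (1); thus $R$ is strongly virtually regular.

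For (3), the easy direction is: if $R$ is a right PID then every nonzero right ideal is principal, say $I=aR$, and since $R$ is a domain the map $r\mapsto ar$ is injective, so $I\cong R$; as virtual regularity is an isomorphism invariant and $R$ is virtually regular by (1) (a domain is trivially a right pp-ring), every right ideal is virtually regular, i.e. $R$ is right completely virtually regular. For the converse, let $I$ be a nonzero right ideal and pick $0\neq a\in I$; since $I$ is virtually regular and $aR\cong R$, we obtain $I=D\oplus D'$ with $D\cong aR\cong R$, and it suffices to force $D'=0$, for then $I=D\cong R$ is principal. This last step is the main obstacle: if $D'\neq 0$, choosing nonzero $g\in D$ and $d\in D'$ gives $gR\cap dR\subseteq D\cap D'=0$, so $D'=0$ amounts exactly to $R_R$ being uniform (right Ore). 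In the commutative case this is automatic and I would phrase it via rank, namely $I\subseteq R$ is torsion-free of rank at most one while $D\cong R$ already has rank one, forcing $D'$ to have rank zero and hence to vanish; so the delicate point is the passage to the uniform/right Ore property, after which the rank (or intersection) argument closes the proof.
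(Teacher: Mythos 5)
Your treatment of (1) and (2) is correct and is essentially the paper's argument: in both directions everything reduces to the observations that a direct summand of $R_R$ is cyclic and projective, and that for a projective principal (resp.\ principal by B\'ezout) right ideal $I$ the surjection $R\to I$ splits, exhibiting $I$ as isomorphic to a direct summand. (The paper in fact only writes out the necessity in (2) and leaves the converse implicit, so your added converse is welcome rather than redundant.)

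For (3) your sufficiency argument is fine, but the necessity, as you yourself say, is not finished: you stop at ``it suffices to force $D'=0$'' and identify this with $R_R$ being uniform (right Ore) without establishing it. That is a genuine gap in what you wrote, and you should close it explicitly: for a \emph{commutative} (more generally, right Ore) domain any two nonzero right ideals meet nontrivially, so $gR\cap dR\subseteq D\cap D'=0$ with $g\neq 0$ forces $D'=0$, i.e.\ $I$ is indecomposable and $I=D\cong aR$ is principal. Your rank argument does the same job in the commutative case. It is worth noting, though, that you have put your finger on exactly the step the paper glosses over: its proof asserts ``since $R$ is a domain, $I$ is indecomposable,'' which is precisely the uniformity you were worried about and which is \emph{false} for non--right-Ore domains. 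Indeed, for $R=k\langle x,y\rangle$ every nonzero right ideal is free of rank $\geq 1$ (Cohn: free algebras are firs), hence has a direct summand isomorphic to $R$, while every nonzero cyclic submodule is isomorphic to $R$; so $R$ is right completely virtually regular but $xR\oplus yR$ is not principal. So the statement of (3) really needs a right Ore (or commutativity) hypothesis, and your instinct that this is ``the main obstacle'' is not a defect of your approach but a defect of the claim in full generality; once that hypothesis is in place, your argument and the paper's coincide.
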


\begin{proof}$(1)$  Suppose $R$ is right virtually regular. Let $I$ be a principal right ideal of $R$. Then $I \cong I'$ where $I' \subseteq^{\oplus} R$. Since $R_R$ is free and $I' \subseteq^{\oplus} R$, $I'$ is projective. Hence $I$ is projective. Thus $R$ is right pp-ring. For the converse, since every principal right ideal $I$ of $R$ is projective, $I$ is isomorphic to a direct summand of $R$. Thus $R$ is right virtually regular.

$(2)$ Suppose $R$ is right strongly virtually regular.  Let $I$ be a finitely generated right ideal of $R$. Hence $I \cong I'$ where $I' \subseteq^{\oplus} R$. Since $I'$ is principal and projective, $I$ is also principal and projective. Thus $R$ is right B\'{e}zout ring and right pp-ring.

$(3)$ Suppose $R$ is right completely virtually regular. Let $I$ be a nonzero right ideal and $0 \neq a \in I$. Then $aR$ is isomorphic to a direct summand of $I$, because $I$ is virtually regular. Since $R$ is a domain, $I$ is indecomposable. So that $aR\cong I$ is principal. Thus $R$ is a right PID, and this proves the necessity. The sufficiency is clear.
\end{proof}

Every domain is right $pp$-ring. Hence every domain is virtually regular, and a domain $R$ is strongly virtually regular if and only if $R$ is a B\'{e}zout domain.

\begin{prop} The following statements are equivalent for a ring $R:$
\begin{enumerate}
  \item Every right $R$-module is virtually regular.
  \item Every injective right $R$-module is virtually regular.
  \item $R$ is semisimple artinian.
\end{enumerate}
\end{prop}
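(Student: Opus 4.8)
The plan is to prove the implications $(1)\Rightarrow(2)\Rightarrow(3)\Rightarrow(1)$, two of which are essentially formal; all the work is concentrated in $(2)\Rightarrow(3)$.

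First I would dispatch the easy directions. The implication $(1)\Rightarrow(2)$ is immediate, since injective modules are in particular modules. For $(3)\Rightarrow(1)$, if $R$ is semisimple artinian then every right $R$-module is semisimple, so every submodule is a direct summand; in particular every cyclic submodule of any module $M$ is (isomorphic to) a direct summand of $M$, whence $M$ is strongly regular and a fortiori virtually regular.

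The heart of the argument is $(2)\Rightarrow(3)$, and my plan is to reduce it to a statement about cyclic modules. The key remark is that a direct summand of an injective module is injective and that injectivity is an isomorphism invariant; hence, if $E$ is an injective virtually regular module, then every cyclic submodule of $E$ --- being isomorphic to a direct summand of $E$ --- is injective. Now for an arbitrary cyclic right $R$-module $C$, pass to its injective hull $E(C)$. By $(2)$ the module $E(C)$ is virtually regular, and $C$ is a cyclic submodule of $E(C)$; therefore $C$ is injective. This shows that hypothesis $(2)$ forces \emph{every} cyclic right $R$-module to be injective.

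Finally I would invoke the classical theorem that a ring all of whose cyclic right modules are injective is semisimple artinian (Osofsky's theorem; see \cite{Lam:LecturesOnModulesAndRings}), which completes the cycle. I expect this last step to be the genuine obstacle: passing from ``every cyclic module is injective'' to semisimplicity is exactly the nontrivial input, and it cannot be shortcut through, say, ``right $V$-ring plus right Noetherian'' (Cozzens' simple Noetherian $V$-domains show that combination is strictly weaker). Accordingly I would treat this as a cited result rather than reprove Osofsky's argument from scratch.
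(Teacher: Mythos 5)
Your proposal is correct and follows essentially the same route as the paper: for $(2)\Rightarrow(3)$ you embed an arbitrary cyclic module $C$ in its injective hull $E(C)$, use the virtually regular hypothesis to identify $C$ with a direct summand of $E(C)$ (hence injective), and then invoke Osofsky's theorem. The easy implications are handled identically, so there is nothing to add.
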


\begin{proof}$(1)\Rightarrow (2)$ and $(3) \Rightarrow (1)$ are clear. Let us prove that $(2)\Rightarrow (3)$ by showing that each cylic right $R$-module is injective. Let $C$ be a cyclic right $R$-module and $E=E(C)$ be the injective hull of $C$. By $(2)$, there is a decomposition $E=K \oplus L$ such that $C\cong K.$ Since $K \ds E$, it is injective. Therefore $C$ is injective. Then $R$ is semisimple artinian by Osofsky's theorem in \cite{O}. This proves $(3).$
\end{proof}



\begin{lemma} If every cyclic right $R$-module is virtually regular, then $R$ is a right $V$-ring. 
\end{lemma}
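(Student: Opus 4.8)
The plan is to use the classical characterization that $R$ is a right $V$-ring if and only if every simple right $R$-module is injective, and to verify the latter condition directly. So I fix a simple right $R$-module $S$ and aim to show it is injective by ruling out proper essential extensions; concretely, I work inside the injective hull $E(S)$ and show that $E(S) = S$.

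Suppose toward a contradiction that $S$ is not injective, so that $S \subsetneq E(S)$, and choose an element $e \in E(S) \setminus S$. The first step is to locate $S$ inside a suitable cyclic module. Since $E(S)$ is an essential extension of $S$ and $eR \neq 0$, the intersection $S \cap eR$ is nonzero, and the simplicity of $S$ then forces $S \subseteq eR$. Thus $S$ sits as a submodule (cyclic, being simple) of the cyclic module $eR$.

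Now I bring in the hypothesis: the cyclic module $eR$ is virtually regular, so the cyclic submodule $S$ is isomorphic to a direct summand, say $eR = A \oplus B$ with $A \cong S$; in particular $A \neq 0$. The decisive step is essentiality: because $S$ is essential in $E(S)$ and $S \subseteq eR \subseteq E(S)$, the submodule $S$ remains essential in $eR$. If $B$ were nonzero, essentiality would give $S \cap A \neq 0$ and $S \cap B \neq 0$, whence $S \subseteq A$ and $S \subseteq B$ by simplicity, forcing $S \subseteq A \cap B = 0$, a contradiction. Hence $B = 0$ and $eR = A \cong S$ is simple; combined with $S \subseteq eR$, this yields $eR = S$ and so $e \in S$, contradicting the choice of $e$.

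Therefore every simple right $R$-module is injective, and $R$ is a right $V$-ring. The only genuinely delicate point is the passage from \emph{``$S$ is isomorphic to a direct summand of $eR$''} to \emph{``$eR$ is indecomposable and equals $S$''}; this is exactly where the essentiality of $S$ in $eR$ (inherited from the injective hull) is needed, since virtual regularity by itself only produces an abstract isomorphic copy of $S$ as a summand, not that $S$ itself is a summand of $eR$.
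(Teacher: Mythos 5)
Your proof is correct and follows essentially the same route as the paper's: both locate $S$ inside a cyclic submodule of $E(S)$ properly containing it (your $eR$) and use essentiality of $S$ together with its simplicity to show that such a cyclic module can have no direct summand isomorphic to $S$, contradicting virtual regularity. You have merely filled in the details that the paper leaves implicit.
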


\begin{proof} Suppose the contrary and let $S$ be a simple noninjective right $R$-module. Let $A$ be a cyclic submodule of $E(S)$ which properly contains $S$. Then $A$ has no direct summand isomorphic to $S$, and so $A$ is not virtually regular. Thus the proof follows.
\end{proof}
Now, we shall give an example of a nonsemisimple ring whose cyclic right modules are virtually regular.
A domain $R$ is said to be \textit{right PCI domain} if every proper cyclic right $R$-module is (semisimple) injective. Over a right PCI domain, every singular right $R$-module is semisimple injective. Moreover, over a right PCI domain, every right $R$-module $M$ has a decomposition as $M=S\oplus N$, where $S$ is semisimple injective and $N$ is nonsingular (see, \cite{Faith}, \cite{Huynh}, \cite{Jain}).

We obtain the following characterization of virtually regular modules over right PCI domains:

\begin{prop} Let $R$ be a right PCI domain. A right $R$-module $M$ is virtually regular if and only if $M$ is semisimple or has a direct summand isomorphic to $R$.
\end{prop}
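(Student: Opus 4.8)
The plan is to prove both implications by exploiting the two structural facts recorded just before the statement: over a right PCI domain every nonzero proper cyclic module is semisimple injective, and every module splits as $M = S \oplus N$ with $S$ semisimple injective and $N$ nonsingular. The common engine for both directions is a dichotomy for a cyclic submodule $C = xR \cong R/\operatorname{ann}(x)$ of an arbitrary module: either $\operatorname{ann}(x)=0$, in which case $C \cong R$, or $\operatorname{ann}(x)\neq 0$ with $x\neq 0$, in which case $C$ is a nonzero proper cyclic module and hence injective, so that $C$ is automatically a direct summand of $M$ (an injective submodule is always a direct summand).

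For sufficiency I would treat the two listed cases separately. If $M$ is semisimple, then every submodule, in particular every cyclic submodule, is a direct summand, so $M$ is virtually regular with nothing further to check. If instead $M = R' \oplus L$ with $R'\cong R$, I would apply the dichotomy to an arbitrary cyclic $C=xR$: when $\operatorname{ann}(x)\neq 0$ the module $C$ is (nonzero) proper cyclic, hence injective, hence isomorphic to a direct summand of $M$ (namely itself); when $\operatorname{ann}(x)=0$ we get $C\cong R\cong R'$, and $R'$ is a direct summand of $M$ by hypothesis. In both cases $C$ is isomorphic to a direct summand, so $M$ is virtually regular.

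For necessity, I would assume $M$ is virtually regular and \emph{not} semisimple, and use the decomposition $M = S\oplus N$ with $S$ semisimple injective and $N$ nonsingular. Since $M$ is not semisimple, $N\neq 0$. Choose $0\neq x\in N$. I claim $\operatorname{ann}(x)=0$: otherwise $xR$ would be a nonzero proper cyclic module, hence singular, contradicting that $xR\subseteq N$ is nonsingular and nonzero. Therefore $xR\cong R$, and virtual regularity forces $xR$ to be isomorphic to a direct summand of $M$, which is precisely a direct summand isomorphic to $R$. This exhausts the two alternatives and completes the argument.

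The step I expect to require the most care is the single assertion $\operatorname{ann}(x)=0$ in the necessity argument, that is, converting nonsingularity of $N$ into the statement that each of its nonzero elements generates a free cyclic submodule. This rests on the fact that over the domain $R$ a nonzero proper cyclic quotient $R/\operatorname{ann}(x)$ is singular, equivalently that every nonzero right ideal of $R$ is essential; this in turn holds because a right PCI domain is right Ore (indeed right Noetherian), so that its nonsingular modules are exactly its torsion-free modules. Once this point is secured, everything else reduces to the cyclic-submodule dichotomy above together with the observation that injective submodules are direct summands.
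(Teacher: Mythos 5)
Your proof is correct and follows essentially the same route as the paper: the same decomposition $M=Z(M)\oplus N$ with $N$ nonsingular for necessity, and the same proper-cyclic-implies-injective dichotomy for sufficiency. The only difference is that you spell out (via the right Ore property) why a nonzero nonsingular cyclic submodule must be isomorphic to $R$, a point the paper leaves implicit.
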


\begin{proof}  Let $M$ be a virtually regular right $R$-module. Suppose $M$ is not semisimple. Then $M=S \oplus N$, where $N$ is nonzero, $S=Z(M)$ is semisimple, and $Z(N)=0$. Let $C$ be a nonsingular cyclic submodule of $M$. Then $C \cong R$, and $C \cong B \ds M$, because $M$ is virtually regular. Hence $M$ has a direct summand isomorphic to $R$. This proves the necessity.

For the proof of the sufficiency, let $C$ be a cyclic submodule of $M$. If $C$ is proper cyclic i.e. $C\cong \frac{R}{I}$ for some $I \neq 0$, then $C$ is injective and so $C \ds M$. If $C \cong R$, then $C \cong B \ds M$ by the assumption that $M$ has a direct summand isomorphic to $R$. Therefore $M$ is virtually regular.
\end{proof}

A cyclic right $R$-module over a PCI domain $R$ is either semisimple or it is isomorphic to $R$, both of which are virtually regular. Hence we have the following.

\begin{corollary} Over a right PCI domain $R$, every cyclic right $R$-module is virtually regular.
\end{corollary}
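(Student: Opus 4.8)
The plan is to reduce the statement to the characterization established in the preceding proposition, by invoking the standard dichotomy for cyclic modules over a PCI domain. First I would fix a cyclic right $R$-module $C$ and write $C \cong R/I$ for some right ideal $I$ of $R$. This splits the argument into two mutually exclusive cases according to whether $I$ is zero or not, and every cyclic module falls into exactly one of them.

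In the case $I = 0$ we have $C \cong R$, so $C$ possesses a direct summand isomorphic to $R$, namely $C$ itself; the preceding proposition then immediately yields that $C$ is virtually regular. In the case $I \neq 0$ the module $C$ is a proper cyclic module, so by the defining property of a right PCI domain $C$ is (semisimple) injective; the semisimple alternative of the preceding proposition again gives that $C$ is virtually regular. Combining the two cases completes the proof.

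There is essentially no genuine obstacle here: the entire weight of the argument rests on the characterization already established, and the only point requiring care is the clean invocation of the PCI hypothesis to dispose of the proper cyclic case. For completeness one could also argue directly, bypassing the proposition altogether. If $C \cong R$, then $C$ is a domain regarded as a module over itself, and every nonzero cyclic submodule is isomorphic to $R \cong C$, which is the unique nontrivial direct summand of the indecomposable module $C$; while if $C$ is proper cyclic, then $C$ is semisimple, so every submodule---in particular every cyclic submodule---is an actual direct summand and hence trivially isomorphic to one. Either route concludes that every cyclic right $R$-module is virtually regular.
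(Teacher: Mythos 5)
Your proposal is correct and follows essentially the same route as the paper: the paper also splits a cyclic module into the proper cyclic (hence semisimple injective) case and the $C\cong R$ case, and then invokes the preceding proposition's characterization. Nothing further is needed.
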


\begin{prop}\label{prop:Z(M)virtuallyregular} If $M$ is a virtually regular right $R$-module, then the singular submodule $Z(M)$ is virtually regular.
\end{prop}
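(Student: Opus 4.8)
The plan is to take an arbitrary cyclic submodule $C$ of $Z(M)$ and manufacture from it a direct summand of $Z(M)$ that is isomorphic to $C$, using only the virtual regularity of $M$ together with the standard additivity of the singular submodule over direct sums. Since virtual regularity of $Z(M)$ is exactly the statement that every such $C$ is isomorphic to a direct summand of $Z(M)$, this is all that is required.

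First I would observe that any cyclic submodule $C \subseteq Z(M)$ is itself singular, because submodules of a singular module are singular. Applying the hypothesis that $M$ is virtually regular to the cyclic submodule $C$ of $M$, I obtain a decomposition $M = D \oplus D'$ with $C \cong D$. The crucial point is then that singularity is preserved under isomorphism, so $D \cong C$ forces $D$ to be singular, i.e. $Z(D) = D$; thus $D$ is in fact a submodule of $Z(M)$.

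Next I would invoke the fact that the singular submodule commutes with (finite) direct sums, $Z(D \oplus D') = Z(D) \oplus Z(D')$, which gives $Z(M) = Z(D) \oplus Z(D') = D \oplus Z(D')$. Hence $D \ds Z(M)$, and since $C \cong D$ we conclude that $C$ is isomorphic to a direct summand of $Z(M)$. As $C$ was an arbitrary cyclic submodule of $Z(M)$, this establishes that $Z(M)$ is virtually regular.

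I do not expect a genuine obstacle here; the one subtlety worth flagging is that the summand $D$ produced by virtual regularity of $M$ is a priori only a direct summand of $M$, not of $Z(M)$. The passage from a summand of $M$ to a summand of $Z(M)$ is precisely what the two elementary facts above accomplish: the equality $Z(D) = D$ (valid because $D$ is isomorphic to the singular module $C$) places $D$ inside $Z(M)$, and the additivity of $Z(-)$ exhibits it as a direct summand there.
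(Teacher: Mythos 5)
Your proof is correct and follows essentially the same route as the paper: pass from a cyclic $C\subseteq Z(M)$ to a direct summand $D$ of $M$ with $C\cong D$, note that $D$ is singular hence lies in $Z(M)$, and then cut the decomposition down to $Z(M)$. The only cosmetic difference is that the paper performs the last step via the modular law, $Z(M)=D\oplus\bigl(D'\cap Z(M)\bigr)$, whereas you invoke additivity of $Z(-)$ over direct sums; since $Z(D')=D'\cap Z(M)$, these are the same step.
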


\begin{proof} Let $C$ be a cyclic submodule of $Z(M)$. Then $C$ is a cyclic submodule of $M$ as well, and so $C\cong B \ds M$ by the assumption. Let $M=B\oplus D$ for some submodule $D$ of $M$. Since singular modules are closed under isomorphism, $B\subseteq Z(M)$. Then, by the modular law, $Z(M)=B\oplus D\cap Z(M).$ Thus, $Z(M)$ is virtually regular.
\end{proof}

Note that Proposition \ref{prop:Z(M)virtuallyregular} is also hold if one replace virtually regular by strongly virtually regular or completely virtually regular. The proof is similar, so it is omitted.

\begin{lemma}\label{lem:regularmodulesarecompletelyvirtuallyregular} Let $M$ be a regular right $R$-module. Then $M$ is completely virtually regular.
\end{lemma}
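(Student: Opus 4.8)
The plan is to reduce the statement to Theorem \ref{strreg}. By the definition of a regular module (in the sense of \cite{ware}), every cyclic submodule of $M$ is a direct summand of $M$; this is precisely condition $(2)$ of Theorem \ref{strreg}. Hence $M$ is strongly regular. Note that the projectivity hypothesis built into the definition of a regular module will play no role in the argument, since everything we need is already encoded in condition $(2)$.

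Next I would invoke the implication $(2) \Rightarrow (3)$ of Theorem \ref{strreg}: for every submodule $N \subseteq M$, every cyclic submodule $C$ of $N$ is a direct summand of $N$. The engine behind this is the modular law, exactly as in the proof of Theorem \ref{strreg}: since $C$ is a cyclic submodule of $M$ it is a direct summand of $M$ by regularity, and a direct summand of $M$ that is contained in $N$ is a direct summand of $N$. Thus $C \ds N$.

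Finally, since a direct summand of $N$ is trivially isomorphic to a direct summand of $N$, each such $C$ is isomorphic to a direct summand of $N$, so $N$ is virtually regular. As $N$ was an arbitrary submodule of $M$, this shows that $M$ is completely virtually regular. There is no genuine obstacle here; the only point to watch is that the weaker conclusion ``isomorphic to a direct summand'' is all that is required for virtual regularity, so the stronger summand property supplied by Theorem \ref{strreg} is more than enough, and in fact the same argument shows that every submodule of $M$ is itself strongly regular.
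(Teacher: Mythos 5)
Your proposal is correct and follows essentially the same route as the paper: the paper also takes a cyclic submodule $C$ of an arbitrary $N \subseteq M$, uses regularity to write $M = C \oplus D$, and applies the modular law to get $N = C \oplus (D \cap N)$, which is exactly the content of the implication $(2)\Rightarrow(3)$ of Theorem \ref{strreg} that you invoke. Your added observation that projectivity plays no role and that every submodule is in fact strongly regular is accurate but not needed.
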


\begin{proof} Let $N$ be a submodule of $M$, and $C$ be a cyclic submodule of $N$. Since $M$ is regular, $M=C \oplus D$ for some $D \subseteq M$. Then, by the modular law, $N=C \oplus (D\cap N)$. Therefore $N$ is virtually regular. Hence $M$ is completely virtually regular.
\end{proof}

Every finitely generated virtually semisimple right module has finite uniform dimension. But this is not the case for  finitely generated (completely) virtually regular modules as it is shown in the following example.

\begin{example}(\cite[Definition 2.2 and Lemma 2.3]{trlifaj}) Let $K$ be a field, and $R$ the subalgebra of $K^{\omega}$ consisting of all eventually constant sequence in $K^{\omega}$.
In other words, $R$ is the subalgebra of $K^{\omega}$ consisting of all eventually
constant sequences in $K^\omega$. For each $i < \omega$, we let $e_i$ be the idempotent in $K^{\omega}$ whose $i$th component is 1
and all the other components are 0. Notice that $\{e_i \mid i < \omega \}$ is an infinite set of pairwise
orthogonal idempotents in $R$. The ring $R$ is a commutative von Neumann regular semiartinian ring of Loewy length
2, with $soc(R) = \sum_ {i< \omega} e_iR = K^{(\omega)}$ and $ \frac{R}{soc(R)} \cong K$. Thus $R$ is a regular ring and $R$ has no finite uniform dimension. Note that $R$ is completely virtually regular by Lemma \ref{lem:regularmodulesarecompletelyvirtuallyregular}.
\end{example}

A right $R$-module is said to be \textit{virtually simple} if every nonzero submodule is isomorphic to $M$. It is easy to see that,  a $\Z$-module is virtually simple if and only if it is isomorphic to $\Z$ or to $\Z_p$ for some prime $p$. Clearly, a domain $R$ is virtually simple if and only if it is a PID. More generally:

\begin{lemma} Let $M$ be a right $R$-module with $u.dim(M)=1.$ $M$ is virtually regular if and only if $M$ is virtually simple.
\end{lemma}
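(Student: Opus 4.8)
The plan is to exploit the fact that $u.dim(M)=1$ makes $M$ \emph{uniform}, hence indecomposable, so that the only direct summands of $M$ are $0$ and $M$ itself; thus for such a module the phrase ``isomorphic to a direct summand of $M$'' collapses to ``zero or isomorphic to $M$.'' Recall that $u.dim(M)=1$ says $M\neq 0$ and any two nonzero submodules of $M$ meet nontrivially, which rules out any decomposition $M=A\oplus B$ with $A,B\neq 0$.

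The implication ``virtually simple $\Rightarrow$ virtually regular'' needs no uniformity at all: if every nonzero submodule of $M$ is isomorphic to $M$, then each cyclic submodule $C$ is either $0$ or satisfies $C\cong M$, and since $0\ds M$ and $M\ds M$, in either case $C$ is isomorphic to a direct summand of $M$. For the converse I would first dispose of the cyclic case. Let $C$ be a nonzero cyclic submodule of a virtually regular uniform $M$; by hypothesis $C$ is isomorphic to a direct summand, and as the only summands are $0$ and $M$ while $C\neq 0$, we must have $C\cong M$. (In particular $M$ is isomorphic to one of its own cyclic submodules, so $M$ is itself a cyclic module.)

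It then remains to promote ``every nonzero cyclic submodule is isomorphic to $M$'' to ``every nonzero submodule is isomorphic to $M$,'' and this is the step I expect to be the main obstacle: virtual regularity constrains only the cyclic submodules, whereas virtual simplicity is a statement about all submodules, so the gap has to be closed using the uniform structure of $M$. Concretely, given $0\neq N\subseteq M$ I would choose $0\neq x\in N$, obtain an isomorphism $g\colon M\to xR$ with $xR\subseteq N\subseteq M$ and $xR$ essential in $M$ (as $M$ is uniform), and then attempt to identify $N$ with $M$ by transporting the sandwich $xR\subseteq N\subseteq M$ through $g$ and iterating, invoking essentiality of the images at each stage. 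This matching is the delicate heart of the argument, and is precisely where the hypothesis $u.dim(M)=1$ must be used in full force.
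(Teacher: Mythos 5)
Your forward direction and your reduction of the converse to the cyclic case are both correct: $u.dim(M)=1$ makes $M$ uniform, hence indecomposable, so the only direct summands of $M$ are $0$ and $M$, and a nonzero cyclic submodule isomorphic to a direct summand must therefore be isomorphic to $M$ (in particular $M$ is itself cyclic). The paper offers no proof of this lemma at all, so there is no argument to compare yours against; but the step you flag as ``the delicate heart of the argument'' --- promoting ``every nonzero cyclic submodule is isomorphic to $M$'' to ``every nonzero submodule is isomorphic to $M$'' --- is a genuine gap, and it cannot be closed. A sandwich $M\cong xR\subseteq N\subseteq M$ does not force $N\cong M$, and no iteration or appeal to essentiality will repair this, because the ``only if'' direction of the lemma is false as stated. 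Take $R=M=\Z[X]$, the ring of Example \ref{examplecvss}: it is a commutative domain, hence uniform as a module over itself, so $u.dim(M)=1$; it is virtually regular, since every nonzero principal ideal is isomorphic to $\Z[X]$, which is a direct summand of itself; yet it is not virtually simple, because the ideal $(2,X)$ is not principal, and an ideal of a commutative domain that is isomorphic to the ring as a module is generated by the image of $1$, hence principal.

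So the honest content of ``$u.dim(M)=1$ and virtually regular'' is precisely what you did prove: $M$ is cyclic and every nonzero cyclic submodule of $M$ is isomorphic to $M$ (the analogue, for arbitrary rings, of condition (3) in Proposition \ref{lem:cyclicvirtuallyregular}). The statement becomes correct if ``virtually regular'' is replaced by ``virtually semisimple'' (then every submodule, not just every cyclic one, is isomorphic to a direct summand, and indecomposability finishes the argument), or if ``virtually simple'' is weakened to the corresponding condition on cyclic submodules. As written, your proposal could not have been completed, and the obstruction you sensed at the final step is real rather than technical.
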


It is natural to ask, whether a virtually regular right module with finite uniform dimension is a direct sum of virtually simple modules. The following example shows that this is not the case.

\begin{example} The $\Z$-module $M=\Q \oplus \Z$ has uniform dimension two, and it is virtually regular. But, $\Q$ is not virtually simple.
\end{example}

\begin{prop}\label{prop:svrudim} Let $M$ be a strongly virtually regular module with finite uniform dimension. Then $M$ is finitely generated.
\end{prop}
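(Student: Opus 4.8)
The plan is to exploit the additivity of uniform dimension over direct summands together with the defining property of strong virtual regularity, namely that every finitely generated submodule is isomorphic to a direct summand. Write $n = u.dim(M)$; if $M = 0$ the claim is trivial, so assume $M \neq 0$. Since uniform dimension is monotone on submodules, every finitely generated submodule $N$ of $M$ satisfies $u.dim(N) \le n$. Hence the uniform dimensions of finitely generated submodules form a set of nonnegative integers bounded by $n$, and I may choose a finitely generated submodule $N_0$ whose uniform dimension $m$ is maximal among all finitely generated submodules of $M$.

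The first key step is to show that $N_0$ is essential in $M$. If it were not, there would be a nonzero submodule $K$ with $N_0 \cap K = 0$; picking $0 \neq k \in K$ gives $N_0 \cap Rk = 0$, so the sum $N_0 + Rk = N_0 \oplus Rk$ is again finitely generated and has uniform dimension $u.dim(N_0) + u.dim(Rk) \ge m + 1$, contradicting the maximality of $m$. Therefore $N_0$ is essential in $M$, and since an essential submodule shares the uniform dimension of the ambient module, $m = u.dim(N_0) = u.dim(M) = n$.

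Finally I would invoke strong virtual regularity: as $N_0$ is finitely generated, $N_0 \cong B$ for some direct summand $B$ of $M$, say $M = B \oplus C$. Because uniform dimension is an isomorphism invariant and additive over direct sums, $n = u.dim(M) = u.dim(B) + u.dim(C) = u.dim(N_0) + u.dim(C) = n + u.dim(C)$, which forces $u.dim(C) = 0$ and hence $C = 0$. Thus $M = B \cong N_0$ is finitely generated, as desired.

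The argument is essentially mechanical once the right submodule is selected; the only point requiring care---and the step I regard as the crux---is the passage establishing that a finitely generated submodule of maximal uniform dimension is automatically essential, since this is what pins $m$ to $n$ and collapses the complement $C$ to zero. All the underlying facts used (monotonicity, additivity, and isomorphism-invariance of uniform dimension, and the preservation of uniform dimension by essential submodules) are standard.
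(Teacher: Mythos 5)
Your proof is correct and follows essentially the same route as the paper's: produce an essential finitely generated submodule, use strong virtual regularity to realize it as a direct summand, and let additivity of uniform dimension force the complement to vanish. The only (harmless) difference is that the paper obtains the essential submodule directly as $a_1R\oplus\cdots\oplus a_nR$ from the standard structure of a module of uniform dimension $n$, whereas you construct one by maximizing uniform dimension over finitely generated submodules.
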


\begin{proof} Suppose that $M$ has a finite uniform dimension, say $n$. Then there is an essential submodule $N=a_1R \oplus \cdots \oplus a_nR$ of $M$. By the hypothesis, $N$ is isomorphic to a direct summand of $M$, that is $M=K\oplus L$ for some submodules $K,\,L$ of $M$ with $N\cong K.$ Clearly $M$ and $K$ have the same uniform dimension, and so $L$ must be the zero module. Thus $M=K$, and hence  $M$ is finitely generated.
\end{proof}

The converse of Proposition \ref{prop:svrudim} is not true. For example every regular ring is strongly regular, and the uniform dimension of nonsemisimple regular rings is not finite.

\begin{prop} The following are hold.
\begin{enumerate}
\item[(1)] A Noetherian right module $M$ is virtually semisimple if and only if $M$ is strongly virtually regular.

  \item[(2)] Any Artinian strongly virtually regular right module $M$ is semisimple.

\end{enumerate}
\end{prop}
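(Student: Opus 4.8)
The plan is to treat the two parts separately, since part $(1)$ is almost formal and the real content lies in part $(2)$.

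For part $(1)$, the forward implication is immediate: if $M$ is virtually semisimple then \emph{every} submodule, in particular every finitely generated one, is isomorphic to a direct summand, so $M$ is strongly virtually regular (this is exactly the arrow recorded in the hierarchy diagram of the introduction). For the converse I would use the defining feature of a Noetherian module, namely that every submodule is finitely generated. Thus if $M$ is Noetherian and strongly virtually regular, an arbitrary submodule $N \subseteq M$ is finitely generated, and strong virtual regularity yields $N \cong K$ with $K \ds M$. Since $N$ was arbitrary, every submodule of $M$ is isomorphic to a direct summand, which is precisely virtual semisimplicity. No obstacle arises here.

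For part $(2)$, the key idea is to locate a single finitely generated submodule that already ``sees'' all of $M$. First I would observe that an Artinian module has finite uniform dimension: an infinite independent family of nonzero submodules would produce a strictly descending chain of tail sums, contradicting the DCC. (Alternatively one may invoke Proposition \ref{prop:svrudim} to conclude that $M$ is finitely generated, but finite uniform dimension is what the argument actually needs.) Next I would pass to the socle $\Soc(M)$. In an Artinian module every nonzero submodule contains a simple submodule, so $\Soc(M)$ meets every nonzero submodule nontrivially and is therefore essential in $M$; moreover finite uniform dimension forces $\Soc(M)$ to be a finite direct sum of simple modules, hence finitely generated.

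The final step applies strong virtual regularity to $S := \Soc(M)$: there is a decomposition $M = K \oplus L$ with $S \cong K$, and $K$ is semisimple because $S$ is. Now I would compare uniform dimensions. Since $S$ is essential in $M$ we have $u.dim(S) = u.dim(M)$; the isomorphism $S \cong K$ gives $u.dim(K) = u.dim(S)$; and $M = K \oplus L$ gives $u.dim(M) = u.dim(K) + u.dim(L)$. Combining these forces $u.dim(L) = 0$, so $L = 0$ and $M = K$ is semisimple. The only place any care is needed — and hence the main point to get right — is that strong virtual regularity must be invoked at the socle, a possibly non-cyclic finitely generated submodule, rather than merely at cyclic submodules, and that the essentiality of the socle is what makes the dimension count collapse the complement $L$ to zero.
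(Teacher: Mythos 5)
Your proof is correct and follows essentially the same route as the paper: part (1) is the same formal observation that Noetherian means every submodule is finitely generated, and in part (2) both arguments apply strong virtual regularity to the finitely generated socle to obtain $M = K \oplus L$ with $K \cong \Soc(M)$ semisimple. The only difference is the final step — you collapse $L$ by a uniform-dimension count using essentiality of the socle, whereas the paper observes that $\Soc(L)=0$ while every nonzero Artinian module has nonzero socle — and these two mechanisms are interchangeable here.
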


\begin{proof} $(1)$ This is clear, since every submodule of $M$ is finitely generated.

$(2)$ Since $M$ is Artinian, $\Soc(M)$ is finitely generated. Then, by the hypothesis, $M=K\oplus N$ for some $K \cong \Soc(M)$. Every nonzero Artinian module has nonzero socle, thus $\Soc(M)=\Soc(N)$ implies that $N=0.$ Therefore, $M=\Soc(M)$ is semisimple.
\end{proof}

\begin{defn}
Let $M$ be a right $R$-module.
\begin{enumerate}
    \item  $M$ is said to be \textit{retractable} if for any nonzero submodule $N$ of $M$, $Hom_R(M, N) \neq 0$; and $M$ is called \textit{epi-retractable} if for every submodule $N$ of $M$, there exists an epimorphism $f : M \rightarrow N$. We call $M$ \textit{cyclic epi-retractable} if for every cyclic submodule $A$ of $M$, there exists an epimorphism $f : M \rightarrow A$.
\item Let $P$ and  $M$ be $R$-modules. $P$ is said to be \textit{$M$-projective}, if for every module $N$ and every epimorphism $g:M \to N$ and a homomorphism $f: P \to N$, there is a homomorphism $h: P\to M$ such that $f=gh.$ If $P$ is $P$-projective, then $P$ is called \textit{quasi-projective}. We call $P$ \textit{cyclic $M$-projective} if the said property hold for each cyclic module $N$, and we call $P$ \textit{weak quasi-projective} if $P$ is cyclic $P$-projective.

Every nonzero module with $rad(M)=M$ is weak quasi-projective because $M$ has no (nonzero) cyclic factor modules. In particular, $\Q$ is weak quasi-projective as a $\Z$-module but $\Q$ is not quasi-projective (see, \cite[Theorem 1]{ranga}).
\end{enumerate}
\end{defn}

Every ring is left and right cyclic epi-retractable. Every epi-retractable right module is cyclic epi-retractable. A ring $R$ is right epi-retractable if and only if $R$ is a right principal ideal ring. Hence any ring which is not a right principal ideal ring is an example of a cyclic epi-retractable ring that is not right epi-retractable. Every virtually regular right module is cyclic epi-retractable. But the converse is not true in general. For example, it is easy to see that $\Z_{p^2}$ is cyclic epi-retractable but not virtually regular.


Properties of virtually semisimple modules in \cite[Proposition 2.1]{vss} are adapted to (completely) virtually regular modules as follows.
\begin{prop}\label{prop:mainproperties}
The following statements hold for a right $R$-module $M$.
\begin{enumerate}
  \item Suppose $M$ is (strongly) virtually regular  module. If $M = M_1 \oplus M_2$ is a decomposition for $M$ such that $Hom_{R}(M_1, M_2) = 0$, then $M_2$ is a (strongly) virtually regular $R$-module.

  \item Let $I$ be an ideal of $R$ with $MI=0$. Then $M$ is a (completely) virtually regular $\frac{R}{I}$-module if and only if $M$ is (completely) virtually regular as an $R$-module.

  \item  $M$ is virtually regular weak quasi-projective if and only if it is a cyclic epi-retractable  and all of its cyclic submodules are $M$-projective.

  \item Suppose $M$ is virtually regular and $A \subseteq M$. If $A$ contains any cyclic submodule $K$ of $M$ with $K$ is embedded in $A$, then $A$ is virtually regular. 
\end{enumerate}
\end{prop}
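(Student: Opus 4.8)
The four parts are largely independent, so I would treat them in turn, modeling each argument on the virtually semisimple analogue in \cite[Proposition 2.1]{vss}.

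\textbf{Part (1).} The plan is to first observe that $\Hom_{R}(M_1,M_2)=0$ forces $M_1$ to be fully invariant in $M$: for any endomorphism $\varphi$ of $M$, the $M_2$-component of $\varphi|_{M_1}$ lies in $\Hom_{R}(M_1,M_2)=0$, so $\varphi(M_1)\subseteq M_1$. Now let $C$ be a cyclic (resp.\ finitely generated) submodule of $M_2$; since $M$ is (strongly) virtually regular, $C\cong B$ for some $B\ds M$, say $M=B\oplus B'$. Applying the two projections of this decomposition to $M_1$ and using full invariance yields $M_1=(M_1\cap B)\oplus(M_1\cap B')$. The key step is to show $M_1\cap B=0$: otherwise, composing the projection $M_1\to M_1\cap B$ with the isomorphism $B\xrightarrow{\sim}C\subseteq M_2$ would produce a nonzero element of $\Hom_{R}(M_1,M_2)$, a contradiction. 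Hence $M_1\subseteq B'$, the projection $\pi\colon M\to M_2$ restricts to a monomorphism on $B$, and a short computation (writing $B'=M_1\oplus(M_2\cap B')$) gives $M_2=\pi(B)\oplus(M_2\cap B')$ with $\pi(B)\cong B\cong C$. Thus every such $C$ is isomorphic to a direct summand of $M_2$; since cyclicity was never used beyond ``$C$ is a submodule isomorphic to a summand of $M$'', the same argument settles both the virtually regular and the strongly virtually regular cases.

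\textbf{Part (2).} Here the point is simply that $MI=0$ makes the $R$-action on $M$ factor through $R/I$, so the $R$-submodules and the $R/I$-submodules of $M$ coincide, a subset is cyclic (resp.\ finitely generated) over $R$ iff it is so over $R/I$, the $R$-linear maps between subquotients of $M$ are exactly the $R/I$-linear ones, and direct-summand decompositions agree for the two structures. Since \emph{all} the data entering the definitions are preserved, $M$ is virtually regular (resp.\ completely virtually regular) over $R$ if and only if it is so over $R/I$.

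\textbf{Part (3).} I would mirror the virtually semisimple proof. For the forward implication, recall from the text that every virtually regular module is cyclic epi-retractable, which gives the first conclusion at once; for the second, given a cyclic $C\subseteq M$ choose $B\ds M$ with $C\cong B$ and $B$ cyclic, and establish the relative projectivity of $B$ (hence of $C$) by the standard device: for an epimorphism $g\colon M\to N$ and a map $f\colon B\to N$, lift $f\circ\pi$ (with $\pi\colon M\to B$ the projection) through $g$ and precompose with $B\hookrightarrow M$. Conversely, given cyclic epi-retractability and $M$-projectivity of cyclic submodules, virtual regularity follows from the splitting lemma: for cyclic $C\subseteq M$ take an epimorphism $g\colon M\to C$ and lift $\mathrm{id}_C$ through $g$ using that $C$ is $M$-projective, so $g$ splits and $C$ is isomorphic to a summand of $M$. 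The main obstacle, and the step demanding care, is reconciling \emph{weak} quasi-projectivity (which lifts only against cyclic quotients) with the stated $M$-projectivity: in the forward direction the lifting trick directly delivers only lifting against cyclic targets, so one must use that $B$, being cyclic, has cyclic image under every homomorphism, while in the converse one must recover weak quasi-projectivity of $M$ itself from the cyclic data. This is the cyclic analogue of the classical equivalence ``quasi-projective $\Leftrightarrow$ every direct summand is $M$-projective'', and it is where the bulk of the verification sits.

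\textbf{Part (4).} I read the hypothesis as: every cyclic submodule $K$ of $M$ that embeds into $A$ is already contained in $A$. The argument is then short. Let $C\subseteq A$ be cyclic; as $C$ is also a cyclic submodule of $M$ and $M$ is virtually regular, $C\cong B$ for some $B\ds M$ with $B$ cyclic. Since $B\cong C\subseteq A$, the cyclic submodule $B$ of $M$ embeds into $A$, so by hypothesis $B\subseteq A$. Now $B\ds M$ together with $B\subseteq A\subseteq M$ gives, by the modular law, $A=B\oplus(A\cap B')$ whenever $M=B\oplus B'$; hence $B\ds A$, and $C\cong B\ds A$. Therefore every cyclic submodule of $A$ is isomorphic to a direct summand of $A$, i.e.\ $A$ is virtually regular.
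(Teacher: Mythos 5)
Your parts (1), (2) and (4) are correct and in substance identical to the paper's treatment. For (1) and (2) the paper simply defers to \cite[Proposition 2.1]{vss}; your written-out argument for (1) --- full invariance of $M_1$ from $\Hom_R(M_1,M_2)=0$, the induced decomposition $M_1=(M_1\cap B)\oplus(M_1\cap B')$, the vanishing of $M_1\cap B$, and the modular-law bookkeeping giving $M_2=\pi(B)\oplus(M_2\cap B')$ with $\pi(B)\cong B\cong C$ --- is exactly the standard proof being cited, and your remark that cyclicity of $C$ is never used correctly yields the ``strongly'' case for free. Part (4) coincides with the paper's proof ($K\cong L\ds M$, $L$ cyclic and embedded in $A$, hence $L\subseteq A$ by hypothesis, hence $L\ds A$ by the modular law), and your reading of the awkwardly phrased hypothesis is the one the paper's own proof uses.

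Part (3) is where your proposal stops short of a proof. You correctly isolate the delicate point --- a cyclic direct summand $B$ of a \emph{weak} quasi-projective $M$ is directly seen to lift only against epimorphisms $M\to N$ with $N$ cyclic, whereas the statement asserts $M$-projectivity outright --- but you then declare this reconciliation to be ``where the bulk of the verification sits'' without carrying it out, so the forward implication is left open. The paper dispatches the same point in one sentence: the class of weak quasi-projective modules is closed under direct summands and isomorphisms, hence $K$, and therefore $N\cong K$, is $M$-projective. In effect the paper reads ``$M$-projective'' here as ``cyclic $M$-projective,'' which is the notion it actually defines and the only one it uses in the converse (where the target of every lifting problem is the cyclic module $N$ itself). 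Under that reading your outline closes up and matches the paper's; under the literal reading of $M$-projectivity you would need to supply the missing reduction of a lifting problem $K\to M/L$ to one with cyclic target, and neither your sketch nor the paper does so. Likewise, your converse is only gestured at: the paper's version takes an epimorphism $g:M\to B$ onto a cyclic submodule, splits it via a section $h$ with $gh=1_B$, and then writes any $f:M\to B$ as $g(hf)$ to conclude weak quasi-projectivity of $M$; you should write this final verification down rather than defer it.
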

\begin{proof} (1) and (2) are similar to proof of (i) and $(ii)$ in \cite[Proposition 2.1.]{vss}.

      (3) Suppose $M$ is virtually regular weak quasi-projective. Let $N$ be a cyclic submodule of $M$. Then $N \cong K \ds M$ for some $K$ of $M$. Thus there is an epimorphism $M \to N$. So $M$ is cyclic epi-retractable. On the other hand, the class of weak quasi-projective modules is closed under direct summands and isomorphisms. Therefore $K$, and hence $N$ is $M$-projective. This proves the necessity.
      Conversely, suppose that $M$ is cyclic epi-retractable and every cyclic submodule of $M$ is $M$-projective. Let $N$ be a cyclic submodule of $M$. Since $M$ is cyclic epi-retractable, there is an epimorphism $f:M \to N$. Also $N$ is $M$-projective by the hypothesis. Therefore there is a homomorphism $g: N \to M$ such that $fg=1_N$. Hence $N$ is isomorphic to a direct summand of $M$, so $M$ is virtually regular.

  Now, let us show that $M$ is weak quasi-projective. Let $B$ be a cyclic submodule of $M$, and $g:M\to B$ be an epimorphism. Since $B$ is $M$-projective, the map $g$ splits i.e. there is $h:B\to M$ such that $gh=1_B.$ Now if $f:M\to B$  is any homomorphism, then the map $hf: M \to M$ satisfies $g(hf)=f$. Thus, $M$ is weak quasi-projective.


 (4) Let $A$ be a submodule of $M$ and $K$ be a cyclic submodule of $A$. Since $M$ is virtually regular, $K \cong L$ where $L \subseteq^{\oplus} M$. Then $L$ is also cyclic, and by the hypothesis $L \subseteq A$. Thus, by the modular law, $L \subseteq^{\oplus} A$, and hence $A$ is virtually regular.
\end{proof}

An $R$-module $M$ is said to be \textit{Rickart module} if for each $\phi \in End_R(M)$, then $\Ker \phi \ds M$. Weaking this notion, we call $M$\textit{ weak Rickart module} if for each $\phi \in End_R(M)$ with $\phi (M)$ cyclic, then $\Ker \phi \ds M$.

The following is the analogue of \cite[Proposition 2.2.]{vss} for virtually regular modules.

\begin{prop}
A weak quasi-projective $R$-module $M$ is virtually regular if and only if it is a cyclic epi-retractable weak Rickart $R$-module.
\end{prop}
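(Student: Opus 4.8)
The plan is to prove the two implications separately, leaning on the already established Proposition \ref{prop:mainproperties}(3).

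For the forward implication, I would suppose $M$ is weak quasi-projective and virtually regular. Proposition \ref{prop:mainproperties}(3) then tells me immediately that $M$ is cyclic epi-retractable and that every cyclic submodule of $M$ is $M$-projective; the first of these is exactly half of the desired conclusion, so it remains only to verify that $M$ is weak Rickart. Given $\phi \in \operatorname{End}_R(M)$ whose image $C = \phi(M)$ is cyclic, I would regard $C$ as a cyclic submodule of $M$ — hence $M$-projective — and consider the corestriction $\pi : M \to C$ of $\phi$, which is an epimorphism. Feeding $\pi$ and the identity $1_C$ into the $M$-projectivity of $C$ produces a map $h : C \to M$ with $\pi h = 1_C$; thus $\pi$ splits, $M = h(C) \oplus \Ker \pi$, and since $\Ker \pi = \Ker \phi$ I conclude $\Ker \phi \ds M$. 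So $M$ is weak Rickart.

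For the converse, I would suppose $M$ is cyclic epi-retractable and weak Rickart, and take an arbitrary cyclic submodule $N \subseteq M$. Cyclic epi-retractability supplies an epimorphism $f : M \to N$; composing with the inclusion $\iota : N \hookrightarrow M$ yields an endomorphism $\phi = \iota f$ with cyclic image $\phi(M) = N$. The weak Rickart hypothesis then forces $\Ker \phi \ds M$, and since $\iota$ is monic we have $\Ker \phi = \Ker f$, so $M = \Ker f \oplus K$ for some submodule $K$. The restriction $f|_K$ is injective, because $K \cap \Ker f = 0$, and surjective, because $f(K) = f(M) = N$, hence it is an isomorphism, giving $N \cong K \ds M$. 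As $N$ was an arbitrary cyclic submodule, $M$ is virtually regular.

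I expect the only delicate point to be the weak Rickart part of the forward direction: producing a splitting of the corestriction $\pi$ genuinely requires the $M$-projectivity of the cyclic image, which is precisely where the weak quasi-projectivity hypothesis enters (through Proposition \ref{prop:mainproperties}(3)). By contrast, the converse uses only cyclic epi-retractability and the weak Rickart condition and never appeals to weak quasi-projectivity, so the hypothesis is really needed in just one direction.
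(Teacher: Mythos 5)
Your proof is correct, and the forward direction is essentially the paper's: both of you feed Proposition \ref{prop:mainproperties}(3) into the $M$-projectivity of the cyclic image $\phi(M)$ to split the corestriction and land the kernel as a summand. The converse, however, is where you genuinely diverge, and to your advantage. The paper obtains the splitting of the epimorphism $f:M\to A$ by writing ``since $M$ is weak quasi-projective, there is $g:A\to M$ with $fg=\mathrm{id}_A$''; as stated this is not a valid deduction, since (weak) quasi-projectivity of $M$ lifts maps out of $M$, not out of $A$ (for instance $\Z_4$ is quasi-projective over $\Z$, yet the epimorphism $\Z_4\to 2\Z_4$, $x\mapsto 2x$, does not split), and the paper's converse never actually invokes the weak Rickart hypothesis. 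You instead form $\phi=\iota f\in\operatorname{End}_R(M)$, apply the weak Rickart condition to get $\Ker\phi=\Ker f\ds M$, and read off $N\cong K\ds M$ from the complementary summand; this is the argument the hypothesis was designed for, it repairs the gap in the printed proof, and it makes transparent your closing observation that weak quasi-projectivity is needed only for the forward implication.
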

\begin{proof}
Assuming $M$ is virtually regular gives $M$ is cyclic epi-retractable by Proposition \ref{prop:mainproperties}. Now, let $f \in End(M)$ with $f(M)$ is cyclic. Then by assumption, $f(M)$ is isomorphic to a direct summand of $M$. Hence it is $M$-projective. It follows that $Kerf$ is a direct summand of $M$, proving that $M$ is Rickart. Suppose, for the converse, $M$ is a cyclic epi-retractable and weak Rickart $R$-module. Let $A$ be a cyclic submodule of $M$. Then there is an epimorhism $f : M \rightarrow A$. Since $M$ is weak quasi-projective, there is a homomorphism $g : A \rightarrow M$ such that $f g = id_{A}$. Therefore $M = Kerf \oplus Img$, and hence $$A \cong \frac{M}{Kerf} \cong Img \subseteq^{\oplus} M.$$ Thus $M$ is virtually regular.
\end{proof}

\section{Modules over commutative rings}
In this section, we study (completely) virtually regular modules over commutative rings.  The rings considered in this section are commutative, unless otherwise stated.

We begin with the following.

\begin{prop}\label{lem:cyclicvirtuallyregular} The following statements are equivalent for a nonzero indecomposable module $A$ over a commutative ring $R$.
\begin{enumerate}
  \item $A$ is  virtually regular.
  \item $A \cong \frac{R}{P}$ for some prime ideal $P$.
  \item Every nonzero cyclic submodule of $A$ is isomorphic to $A$.
\end{enumerate}
\end{prop}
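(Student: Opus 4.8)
The plan is to prove the three conditions equivalent by establishing the cycle $(1)\Rightarrow(3)\Rightarrow(2)\Rightarrow(1)$, using the indecomposability of $A$ at every turn.

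For $(1)\Rightarrow(3)$ I would simply take a nonzero cyclic submodule $C$ of $A$. Virtual regularity gives an isomorphism $C\cong L$ with $L\ds A$, and since $A$ is indecomposable its only direct summands are $0$ and $A$; as $C\neq 0$ we are forced to have $L=A$, whence $C\cong A$. This step is immediate.

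The heart of the argument is $(3)\Rightarrow(2)$. Assuming that every nonzero cyclic submodule of $A$ is isomorphic to $A$, I would first choose any $0\neq a\in A$; then $aR\cong A$, so $A$ is itself cyclic and we may write $A\cong R/I$ with $I=\operatorname{Ann}(a)$. To see that $I$ is prime I would argue by contradiction: suppose $rs\in I$ while $r,s\notin I$. Then $s+I$ is a nonzero element of $R/I\cong A$, so the cyclic submodule $(s+I)R$ is isomorphic to $A\cong R/I$ by hypothesis $(3)$. On the other hand, the natural surjection $R\to (s+I)R$ has kernel the colon ideal $(I:s)=\{x\in R: sx\in I\}$, so $(s+I)R\cong R/(I:s)$, and here $(I:s)$ strictly contains $I$ because $r\in(I:s)\setminus I$. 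The key observation is that the annihilator is an isomorphism invariant with $\operatorname{Ann}(R/J)=J$ for every ideal $J$; hence $R/(I:s)\cong R/I$ forces $(I:s)=I$, contradicting $r\notin I$. Therefore $I$ is prime and $A\cong R/P$. I expect the main obstacle to be precisely this colon-ideal/annihilator mechanism — pinning down primeness from the self-similarity of cyclic submodules — while the rest is formal.

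Finally, for $(2)\Rightarrow(1)$ I would use that $R/P$ is an integral domain. A cyclic submodule of $A\cong R/P$ is either zero, hence a (trivial) direct summand, or of the form $(b+P)R$ with $b\notin P$; since $P$ is prime, $\operatorname{Ann}(b+P)=P$, so $(b+P)R\cong R/P=A$, which is a direct summand of itself. Thus every cyclic submodule of $A$ is isomorphic to a direct summand, i.e. $A$ is virtually regular. Alternatively, one can invoke that every domain is virtually regular together with Proposition \ref{prop:mainproperties}(2) to transfer virtual regularity between the $R/P$-module and $R$-module structures on $A$.
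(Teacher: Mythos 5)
Your proposal is correct and follows essentially the same route as the paper: indecomposability forces every nonzero cyclic submodule to be isomorphic to $A$ itself (hence $A$ is cyclic), primality of the annihilator is extracted from the fact that isomorphic cyclic modules $R/J$ have equal annihilators (the colon-ideal computation), and the converse direction uses that $(P:b)=P$ for $b\notin P$ when $P$ is prime. The only cosmetic difference is that you run the cycle as $(1)\Rightarrow(3)\Rightarrow(2)\Rightarrow(1)$ while the paper uses $(1)\Rightarrow(2)\Rightarrow(3)\Rightarrow(1)$.
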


\begin{proof}$(1) \Rightarrow (2)$ Let $C$ be a nonzero cyclic submodule of $A$. Then $C\cong B \ds A$ for some $B \subseteq A.$ Since $A$ is indecomposable, $B=A$ is cyclic. Suppose $A\cong \frac{R}{P}$ for some ideal $P$ of $R$. Lest us show that $P$ is a prime ideal of $R$. For this purpose, assume that $ab\in P$ for some $a,b \in R$. If $a$ is not contained in $P$, then $R(a+P) \cong \frac{R}{P}$ by $(1)$. Thus $ann_R(R(a+P))=P$. As $ab\in P$ and $R$ is commutative, we have $b \in ann_R(a+P)=P.$ Therefore $P$ is a prime ideal of $R$.

$(2) \Rightarrow (3)$ Let $(a+P)$ be a nonzero element of $\frac{R}{P}$. Then $a$ is not in $P$. Suppose that, $R(a+P)$ is not isomorphic to $\frac{R}{P}.$  Then $R(a+P)\cong \frac{R}{Q}$ for some $P\subset Q \subseteq R$. Then $Qa \subseteq P$, and so primality of $P$ implies that $Q \subseteq P$,  a contradiction. Hence $R(a+P)\cong \frac{R}{P}$, and so $(3)$ holds.

$(3) \Rightarrow (1)$ is clear.
\end{proof}

\begin{prop}
Let $R$ be a commutative ring and $B$ be an indecomposable $R$-module. The following statements are equivalent.
\begin{enumerate}
\item $B$ is completely virtually regular.
\item $B \cong \frac{R}{I}$ is a principal ideal domain.
\item $B$ is virtually semisimple.
\end{enumerate}
\end{prop}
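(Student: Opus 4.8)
The plan is to prove the equivalences by establishing the cycle $(1)\Rightarrow(2)\Rightarrow(3)\Rightarrow(1)$, leaning heavily on the previous proposition to pin down what an indecomposable virtually regular module must look like. Note first that completely virtually regular modules and virtually semisimple modules are both virtually regular, so the previous proposition already forces $B\cong R/P$ for a prime ideal $P$ in every direction; the content of the statement is how the stronger hypotheses upgrade $P$ to yield a PID quotient.

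For $(1)\Rightarrow(2)$, I would start from the fact that $B$ is virtually regular and indecomposable, so by Proposition \ref{lem:cyclicvirtuallyregular} we have $B\cong R/P$ with $P$ prime. Since $R/P$ is a domain, it suffices to show every ideal of $R/P$ is principal. Take any ideal of $R/P$; as a submodule of the completely virtually regular module $B$ it is itself virtually regular, and because $R/P$ is a domain every nonzero submodule is indecomposable (a domain, being uniform, has no nontrivial direct sum decompositions of its ideals). Then for a nonzero $\bar a$ in such an ideal $J$, virtual regularity of $J$ gives $\bar aR/P\cong L$ with $L$ a direct summand of $J$; indecomposability of $J$ forces $L=J$, so $J\cong \bar a R/P$ is cyclic, i.e. principal in the ring $R/P$. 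Hence $R/P$ is a PID. This is essentially the argument already run in Proposition \ref{prop:Rvirtuallyregular}(3), now applied to the ring $R/P$.

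For $(2)\Rightarrow(3)$: if $B\cong R/P$ is a PID, then every nonzero submodule (ideal) is principal, say generated by $\bar a$, and is therefore isomorphic to $R/P$ itself since $R/P$ is a domain and $\bar a R/P\cong (R/P)/\operatorname{ann}(\bar a)=R/P$. Thus every nonzero submodule of $B$ is isomorphic to $B=R/P$, which is a direct summand of itself; the zero submodule is trivially a summand. Hence every submodule of $B$ is isomorphic to a direct summand of $B$, which is exactly the definition of virtually semisimple. For $(3)\Rightarrow(1)$, virtually semisimple modules are completely virtually regular in general, but here it is cleanest to observe directly that if $B$ is virtually semisimple and indecomposable then, running $(3)$ of the previous proposition, $B\cong R/P$ with every nonzero cyclic submodule isomorphic to $B$; any submodule $N$ of $B$ is then virtually regular because its nonzero cyclic submodules are isomorphic to $B\cong N$-summands, so $B$ is completely virtually regular.

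The main obstacle I anticipate is the step establishing principality in $(1)\Rightarrow(2)$, specifically the clean transfer of the indecomposability hypothesis from $B$ to its arbitrary submodules: one must be careful that submodules of a domain (equivalently, ideals of $R/P$) are genuinely indecomposable as modules so that the virtual regularity of the submodule collapses the "isomorphic to a summand" condition into "equal to the whole submodule." Once that uniformity observation is in place, the rest is formal and mirrors the proof pattern of the earlier propositions, so I would keep those verifications brief and cite Proposition \ref{lem:cyclicvirtuallyregular} and Proposition \ref{prop:Rvirtuallyregular} rather than reproving them.
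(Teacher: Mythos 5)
Your proof is correct, and it overlaps with the paper's argument on the first equivalence while diverging on the second. For $(1)\Leftrightarrow(2)$ you do essentially what the paper does: use Proposition \ref{lem:cyclicvirtuallyregular} to get $B\cong \frac{R}{P}$ with $P$ prime, then run the indecomposability-of-ideals argument of Proposition \ref{prop:Rvirtuallyregular}(3) on the domain $\frac{R}{P}$ (the paper routes this through Proposition \ref{prop:mainproperties}(2) to change rings from $R$ to $\frac{R}{P}$, a step you perform implicitly but harmlessly, since $P$ annihilates $B$ and the submodule lattices and isomorphisms coincide). For $(2)\Leftrightarrow(3)$ the paper simply cites \cite[Theorem 2.3]{svss}, whereas you give a short self-contained argument: every nonzero ideal of the PID $\frac{R}{P}$ is generated by a nonzero element of a domain and hence isomorphic to $\frac{R}{P}=B$ itself, giving $(2)\Rightarrow(3)$; and for $(3)\Rightarrow(1)$ you use indecomposability to force every nonzero submodule $N$ to be isomorphic to $B$, so that every nonzero cyclic submodule of $N$ is isomorphic to $N$, which is trivially a summand of itself. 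That closes the cycle and makes the proposition independent of the external reference, which is a small gain in self-containedness over the paper's version. One caveat: your passing assertion that ``virtually semisimple modules are completely virtually regular in general'' is not justified and is not an implication the paper claims (it does not appear in the implication diagram); since you immediately replace it with the direct indecomposable-case argument it does not affect the validity of your proof, but you should delete or qualify that sentence rather than state it as a general fact.
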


\begin{proof} $(1) \iff (2)$ Completely virtually regular indecomposable module is cyclic. Then the proof follows by Proposition \ref{prop:mainproperties}(2) and Proposition \ref{prop:Rvirtuallyregular}(3). 

$(2) \iff (3)$ An indecomposable virtually semisimple module is cyclic. Hence, the proof follows by \cite[Theorem 2.3]{svss}.

\end{proof}

Although the structure of virtually regular torsion modules over commutative domains is not completely known, we have the following for torsion-free modules.

\begin{prop}\label{torsion-freecvssoverdomain}
Let $R$ be a domain and $M$ a torsion-free $R$-module. Then the following statements are equivalent.
\begin{enumerate}
  \item $M_R$ is virtually regular.
  \item $M$ has a direct summand isomorphic to $R$.
\end{enumerate}
\end{prop}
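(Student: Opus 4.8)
The plan is to derive both implications from a single elementary observation: over a domain $R$, every nonzero cyclic submodule of a torsion-free module is free of rank one. Indeed, if $0 \neq x \in M$, the multiplication map $R \To xR$, $r \mapsto xr$, is surjective with kernel $\operatorname{ann}_R(x)$, and torsion-freeness forces $\operatorname{ann}_R(x)=0$, so $xR \cong R$. I would record this fact first, since both directions rest entirely on it; it is precisely the step where the torsion-free hypothesis is genuinely used.

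For $(2)\Rightarrow(1)$, I would assume a decomposition $M \cong R \oplus N$ and take an arbitrary cyclic submodule $C = xR$ of $M$. If $C = 0$ it is isomorphic to the zero summand; if $C \neq 0$, the observation gives $C \cong R$, and since $R \ds M$ by hypothesis, $C$ is isomorphic to a direct summand of $M$. As $C$ was arbitrary, every cyclic submodule is isomorphic to a direct summand, i.e.\ $M$ is virtually regular.

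For $(1)\Rightarrow(2)$, assuming $M$ is virtually regular and nonzero, I would choose any $0 \neq x \in M$; then $xR \cong R$ by the observation, and virtual regularity directly provides a direct summand $K \ds M$ with $K \cong xR \cong R$. This exhibits the desired direct summand of $M$ isomorphic to $R$.

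The argument is short, so I do not anticipate a serious obstacle; the work is entirely in setting up the free-rank-one observation cleanly. The one point requiring care is the degenerate case $M = 0$, where $(1)$ holds vacuously while $(2)$ fails because $R \neq 0$; I would handle this by reading the statement for nonzero $M$ (equivalently, by tacitly adding the hypothesis $M \neq 0$). I would also make sure not to over-invoke machinery: neither Proposition~\ref{lem:cyclicvirtuallyregular} nor indecomposability is needed here, so I would keep the proof self-contained around the multiplication-map isomorphism.
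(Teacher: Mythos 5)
Your proposal is correct and follows essentially the same route as the paper: both directions rest on the observation that torsion-freeness over a domain forces every nonzero cyclic submodule to be isomorphic to $R$. Your explicit remark about the degenerate case $M=0$ (where $(1)$ holds vacuously but $(2)$ fails) is a small point the paper passes over silently, but otherwise the arguments coincide.
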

\begin{proof}

$(1) \Rightarrow  (2)$ Suppose $M_R$ is virtually regular. Let $aR$ be a nonzero cyclic submodule of $M_R$. Since $R$ is a domain and $M$ is torsion-free, $aR \cong R$ is projective. By $(1)$, $M$ is virtually regular, and so $aR\cong L \ds M$ for some submodule $L$ of $M$. Thus $L\cong R$, and so $(2)$ follows.
$(2) \Rightarrow  (1)$ Let $A$  be a cyclic submodule of $M$. Then $A \cong R$, because $R$ is a domain and $M$ is torsion-free. By $(2)$, $M$ has a direct summand isomorphic to $R$. Hence $A$ is isomorphic to a direct summand of $M$. Therefore $M$ is virtually regular.
\end{proof}



For completeness,  we recall the following definitions from \cite{FuchsAndSalce:ModulesOverNonNoetherianDomains}.

\begin{defn}
\begin{enumerate}
\item An $R$-module $M$ is called \textit{uniserial} if its submodules are totally ordered by inclusion; equivalently, for all $a$, $b$ in $M$, $$aR \subseteq bR\text{ or } bR \subseteq aR.$$
\item A  domain $R$ is said to be a \textit{valuation domain} if $R$ is uniserial.  $R$ is a \textit{Pr\"ufer domain} if all its localizations at maximal ideals are valuation domains.

 
 \item A valuation ring is called \textit{maximal} if a system of congruences \begin{equation}\label{eee}
 x \equiv r_k \ \ (mod \ \  L_k) \ \ \ (k \in K) \end{equation} has a solution in $R$ whenever each of its finite sub-system is solvable in $R$,  where $L_k$'s are ideals of $R$, $r_k \in R$ and $K$ is an index set . Also, $R$ is  said to be \textit{ almost maximal} if the above congruences \ref{eee} have a solution whenever $\cap_{k \in K}L_k \neq 0$.
 
 
\end{enumerate}
\end{defn}

A right $R$ module $M$ is \textit{pure-injective} if for every right $R$-module $B$ and pure submodule $A$ of $B$, every homomorphism $f:A \to M$ can be extended to a homomorphism $g: B \to M$. We recall the following result.

\begin{thm}\cite[Theorem 4.6]{FuchsAndSalce:ModulesOverNonNoetherianDomains}\label{almostprüfer}
Uniserial (torsion) modules over an (almost) maximal Pr\"ufer domain are pure-injective.
\end{thm}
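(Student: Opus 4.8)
The plan is to establish this through the classical route via linear compactness, first reducing the Pr\"ufer case to the valuation case and then exploiting the total ordering of submodules forced by uniseriality. Since the submodules of a uniserial module $M$ are totally ordered by inclusion, the annihilators $\operatorname{ann}(a)$ (equivalently, the orders of the cyclic pieces $aR \cong R/\operatorname{ann}(a)$) form a chain, and one checks that this chain cannot be spread across two distinct maximal ideals (for instance, $R/(\mathfrak{m}_1\cap\mathfrak{m}_2)$ splits by CRT and is not uniserial). Hence the $R$-action on $M$ factors through a single localization $R_{\mathfrak{m}}$, and $M \cong M_{\mathfrak{m}}$ is a uniserial module over the valuation domain $R_{\mathfrak{m}}$. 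As $R$ is (almost) maximal Pr\"ufer, $R_{\mathfrak{m}}$ is an (almost) maximal valuation domain, so it suffices to prove the statement over a valuation domain, provided I also verify that purity and the splitting of pure-exact sequences pass between $R$ and $R_{\mathfrak{m}}$ for the modules at hand.

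Next I would invoke the standard criterion that a linearly compact module is pure-injective, reducing the problem to showing that $M$ is linearly compact: every finitely solvable system of congruences $x \equiv m_k \pmod{N_k}$, with $m_k \in M$ and $N_k \subseteq M$, has a simultaneous solution in $M$. This is where uniseriality does the real work: the submodules $N_k$ are totally ordered, so the system is governed by a descending chain of cosets rather than an arbitrary lattice of them. I would well-order the $N_k$ and translate the module-level congruences into $R$-valued congruences $x \equiv r_k \pmod{L_k}$, using the presentations $aR \cong R/\operatorname{ann}(a)$, thereby matching exactly the congruence-solving property that defines an (almost) maximal valuation domain in \eqref{eee}. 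In the torsion case the hypothesis feeds directly into the almost-maximal definition: every element of a torsion uniserial module has nonzero annihilator, which supplies precisely the nonzero-intersection condition $\cap_{k}L_k \neq 0$ required for almost maximality, whereas the full maximal hypothesis is what allows the torsion assumption to be dropped.

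The main obstacle I anticipate is exactly the translation step of the previous paragraph: one must convert a general finitely solvable congruence system over $M$ into the $R$-valued congruence system of Definition \eqref{eee} while confirming that finite solvability is preserved under this translation, and, in the almost-maximal/torsion case, that the pertinent intersection of ideals remains nonzero. A secondary but genuine difficulty is the localization reduction, namely checking carefully that a submodule pure over $R$ remains pure after localizing and that the resulting splitting over $R_{\mathfrak{m}}$ descends to a splitting over $R$; both directions should be routine for uniserial modules but must be confirmed to make the reduction airtight.
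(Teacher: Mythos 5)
First, a caveat: the paper does not actually prove Theorem \ref{almostprüfer} --- it is quoted verbatim from Fuchs--Salce [Chap.~XIII, Theorem 4.6] --- so there is no in-paper argument to measure your proposal against, and I can only judge it on its own terms. Your outline does follow the classical route (reduce to a valuation domain, deduce linear compactness from the congruence condition, then invoke the fact that linearly compact modules are pure-injective), and the two issues you defer at the end --- transfer of purity and splitting along $R\to R_{\mathfrak m}$, and the finite-solvability bookkeeping --- are indeed routine. The problem is that the step you yourself flag as ``the main obstacle'' is the entire content of the theorem, and your plan does not contain the idea that resolves it: for a non-cyclic uniserial module (e.g.\ $Q/R$, or $\Z(p^\infty)$ over $\Z_{(p)}$) there is no single presentation $aR\cong R/\operatorname{ann}(a)$ through which to pull the whole system of congruences back to $R$, so ``translate using the presentations $aR\cong R/\operatorname{ann}(a)$'' is not yet an argument. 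A secondary gap: your localization step rests on the chain of annihilators being nonzero, so it only treats torsion modules; it says nothing about the non-torsion uniserial modules needed for the ``maximal'' half of the statement.

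Both gaps are fillable. For the main one, two observations do the work. (i) Because the $N_k$ in a uniserial module form a chain, pairwise nonempty intersection already forces the cosets to form a chain: if $N_i\subseteq N_j$ and $(m_i+N_i)\cap(m_j+N_j)\neq\emptyset$, then $m_i-m_j\in N_j$, so $m_i+N_i\subseteq m_j+N_j$. (ii) Every proper submodule $V$ of a uniserial module $M$ lies inside a cyclic one: pick $b\in M\setminus V$; then $bR\not\subseteq V$, hence $V\subseteq bR$ by uniseriality. Discarding the trivial case where every $N_k$ equals $M$ and translating by a fixed $m_{k_0}$ with $N_{k_0}$ proper, the entire system is moved inside a single cyclic submodule $bR\cong R/\operatorname{ann}(b)$, where it literally becomes a system $z\equiv s_k \pmod{L_k}$ in $R$ with $L_k\supseteq\operatorname{ann}(b)$ and with finite solvability preserved in both directions. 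If $M$ is torsion then $\bigcap_k L_k\supseteq\operatorname{ann}(b)\neq 0$, which is exactly the almost-maximality hypothesis \eqref{eee}; if $b$ is torsion-free one needs $\bigcap_k L_k$ possibly zero, which is where full maximality enters. For the secondary gap, note that a uniserial module containing a torsion-free element contains a copy of $R$, whose submodules are the ideals of $R$; these are then totally ordered, so $R$ is already a valuation domain and the Pr\"ufer-to-valuation reduction is vacuous in that case. With these insertions your outline becomes a complete proof along what I believe is essentially the standard (Fuchs--Salce/Warfield) line.
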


More generally, one has the following.

\begin{corollary}\label{cor:propercyclicpureinjective} Over an almost maximal Pr\"ufer domain, every proper cyclic module is pure-injective.
  \end{corollary}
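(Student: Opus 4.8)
The plan is to reduce to Theorem~\ref{almostprüfer} by decomposing a proper cyclic module into uniserial torsion pieces. Let $C = R/I$ be a proper cyclic module, so that $I \neq 0$. First I would observe that $C$ is torsion: picking any $0 \neq r \in I$, for every $s \in R$ we have $rs \in I$, so $r(s+I) = 0$; hence $rC = 0$ and $C$ is a torsion $R$-module.

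Next, I would use the fact that an almost maximal Pr\"ufer domain is $h$-local, that is, every nonzero element lies in only finitely many maximal ideals and every nonzero prime lies in a unique maximal ideal. By Matlis' theorem, over an $h$-local domain every torsion module is the direct sum of its localizations; applied to $C$ this gives $C \cong \bigoplus_{\mathfrak{m}} C_{\mathfrak{m}}$, where $\mathfrak{m}$ runs over the maximal ideals of $R$ and $C_{\mathfrak{m}} = R_{\mathfrak{m}}/I_{\mathfrak{m}}$. Since a fixed $0 \neq r \in I$ lies in only finitely many maximal ideals and $I \subseteq \mathfrak{m}$ forces $r \in \mathfrak{m}$, only finitely many summands $C_{\mathfrak{m}}$ are nonzero; explicitly, $C_{\mathfrak{m}} \neq 0$ exactly when $I \subseteq \mathfrak{m}$, in which case $I_{\mathfrak{m}}$ is a nonzero proper ideal of the valuation domain $R_{\mathfrak{m}}$.

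For each such $\mathfrak{m}$, the module $C_{\mathfrak{m}} = R_{\mathfrak{m}}/I_{\mathfrak{m}}$ is a proper cyclic, hence uniserial and torsion, module over the valuation domain $R_{\mathfrak{m}}$, and $R_{\mathfrak{m}}$ is itself an almost maximal valuation domain, being the localization of an almost maximal Pr\"ufer domain at a maximal ideal. Therefore Theorem~\ref{almostprüfer} applies and $C_{\mathfrak{m}}$ is pure-injective as an $R_{\mathfrak{m}}$-module. I would then transfer this to $R$: given a pure embedding $A \hookrightarrow B$ of $R$-modules and a homomorphism $f \colon A \to C_{\mathfrak{m}}$, the map $f$ factors through the localization $A_{\mathfrak{m}}$ because $C_{\mathfrak{m}}$ is an $R_{\mathfrak{m}}$-module; since localization is exact and preserves purity, $A_{\mathfrak{m}} \hookrightarrow B_{\mathfrak{m}}$ is again pure, so pure-injectivity over $R_{\mathfrak{m}}$ extends $A_{\mathfrak{m}} \to C_{\mathfrak{m}}$ over $B_{\mathfrak{m}}$, and precomposing with $B \to B_{\mathfrak{m}}$ yields the required extension $B \to C_{\mathfrak{m}}$. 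Thus each $C_{\mathfrak{m}}$ is pure-injective over $R$.

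Finally, since a finite direct sum of pure-injective modules is again pure-injective, $C \cong \bigoplus_{\mathfrak{m}} C_{\mathfrak{m}}$ is pure-injective, as desired. The main obstacle I anticipate is the structural input that almost maximal Pr\"ufer domains are $h$-local with almost maximal localizations, which underlies both the finite direct-sum decomposition into localizations and the applicability of Theorem~\ref{almostprüfer} to each piece; by contrast, the transfer of pure-injectivity along the localization $R \to R_{\mathfrak{m}}$ is routine once purity is known to localize.
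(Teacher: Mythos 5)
Your proposal is correct and follows essentially the same route as the paper: decompose the proper cyclic module into its (finitely many nonzero) localizations at maximal ideals, recognize each piece as a uniserial torsion module to which Theorem~\ref{almostprüfer} applies, and conclude via the fact that a finite direct sum of pure-injective modules is pure-injective. The only cosmetic difference is that you transfer pure-injectivity from $R_{\mathfrak{m}}$ back to $R$ by an explicit localization argument, whereas the paper applies Theorem~\ref{almostprüfer} directly to each localized piece viewed as a uniserial torsion module over the almost maximal Pr\"ufer domain $R$ itself.
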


\begin{proof}Let $R$ be almost maximal Pr\"ufer and $A\cong \frac{R}{I}$, $I \neq 0$, be a proper cyclic $R$-module. Then $A\cong A_{P_1}\oplus \cdots \oplus A_{P_n}$ for some maximal ideals $P_1,\cdots ,P_n$ by \cite[Theorem 3.7, Chap.IV]{FuchsAndSalce:ModulesOverNonNoetherianDomains} and \cite[Theorem 3.9(a), Chap.IV]{FuchsAndSalce:ModulesOverNonNoetherianDomains}. On the other hand, each $R_{P_i}$ is almost maximal valuation domain by \cite[Theorem 3.9(b), Chap.IV]{FuchsAndSalce:ModulesOverNonNoetherianDomains}. Since $A$ is a proper cyclic module, $A_{P_i}$ is a proper cyclic $R_{P_i}$-module for each $i=1,\cdots,n.$ Thus each $A_{P_i}$ is a uniserial torsion cyclic $R$-module. Then $A_{P_i}$ is pure-injective $R$-module by \cite[Theorem 4.6, Chap.XIII]{FuchsAndSalce:ModulesOverNonNoetherianDomains}. Thus $A$ is pure-injective $R$-module as a finite direct sum of pure-injective $R$-modules. This completes the proof.
\end{proof}

\begin{thm}\label{prop:torsiontorsionfreepart} Let $R$ be a domain and $M$ be an $R$-module. Consider the following statements.
\begin{enumerate}

\item[(1)] $M$ is virtually regular.

\item[(2)] $T(M)$ and $\frac{M}{T(M)}$ are virtually regular.

\end{enumerate}

Then, $(1)\Rightarrow (2).$ Moreover, if $R$ is an almost maximal Pr\"ufer domain, then $(2)\Rightarrow (1)$.
\end{thm}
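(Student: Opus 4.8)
The plan is to prove the two implications separately, and throughout to exploit that over a domain every cyclic submodule $cR\cong R/\operatorname{ann}(c)$ is cleanly of one of two types: torsion-free (precisely when $\operatorname{ann}(c)=0$, so $cR\cong R$) or torsion (when $\operatorname{ann}(c)\neq 0$, so $cR$ is a proper cyclic module). This dichotomy lets me treat the torsion part $T(M)$ and the torsion-free quotient $\frac{M}{T(M)}$ independently.

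For $(1)\Rightarrow (2)$, I would first show $T(M)$ is virtually regular exactly as in Proposition \ref{prop:Z(M)virtuallyregular}: a cyclic submodule $C$ of $T(M)$ is cyclic in $M$, so $C\cong L\ds M$; since $C$ is torsion and torsion is closed under isomorphism, $L\subseteq T(M)$, and the modular law upgrades $L\ds M$ to $L\ds T(M)$. For $\frac{M}{T(M)}$ (torsion-free), I would apply Proposition \ref{torsion-freecvssoverdomain}: assuming $\frac{M}{T(M)}\neq 0$, pick a non-torsion $a\in M$; then $aR\cong R$, and virtual regularity of $M$ gives a direct summand $L\cong R$ with $M=L\oplus D$. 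Because $L\cong R$ is torsion-free one checks $T(M)\subseteq D$, so $\frac{M}{T(M)}\cong L\oplus \frac{D}{T(M)}$ contains a copy of $R$ as a summand; hence $\frac{M}{T(M)}$ is virtually regular.

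The substantive direction is $(2)\Rightarrow (1)$, where the Pr\"ufer and almost-maximality hypotheses enter. Let $C=cR$ be a cyclic submodule of $M$. If $C\cong R$ is torsion-free, then $\frac{M}{T(M)}\neq 0$ is torsion-free and virtually regular, so by Proposition \ref{torsion-freecvssoverdomain} it has a summand $\cong R$; lifting the resulting split surjection $M\to \frac{M}{T(M)}\to R$ through the projectivity of $R$ produces a direct summand of $M$ isomorphic to $R\cong C$, and we are done in this case.

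The delicate case --- and the step I expect to be the main obstacle --- is a torsion cyclic submodule $C$. Here $C\subseteq T(M)$, and virtual regularity of $T(M)$ only gives $C\cong L'\ds T(M)$, i.e.\ $L'$ is a direct summand of $T(M)$ but not obviously of $M$. To bridge this gap I would argue by purity: $L'$ is a proper cyclic module, hence pure-injective by Corollary \ref{cor:propercyclicpureinjective}; as a direct summand $L'$ is pure in $T(M)$; and $T(M)$ is pure in $M$ since over a Pr\"ufer domain the torsion-free quotient $\frac{M}{T(M)}$ is flat. Purity is transitive, so $L'$ is pure in $M$, and a pure submodule that is pure-injective splits off, yielding $C\cong L'\ds M$. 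The two technical inputs I am relying on are precisely the pure-injectivity of proper cyclic modules over almost maximal Pr\"ufer domains and the flatness of torsion-free modules over a Pr\"ufer domain that makes $T(M)$ pure in $M$.
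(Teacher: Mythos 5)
Your proof is correct and follows essentially the same route as the paper's: the modular-law argument for $T(M)$, Proposition \ref{torsion-freecvssoverdomain} plus projectivity of $R$ for the torsion-free part, and in the converse the chain ``summand of $T(M)$ $\Rightarrow$ pure in $T(M)$ $\Rightarrow$ pure in $M$ (since $T(M)$ is pure in $M$ over a Pr\"ufer domain) $\Rightarrow$ splits by pure-injectivity of proper cyclics.'' The only differences are cosmetic: you justify purity of $T(M)$ in $M$ via flatness of the torsion-free quotient where the paper cites \cite[Proposition 8.12]{FuchsAndSalce:ModulesOverNonNoetherianDomains}, and you handle the degenerate case $M=T(M)$ explicitly.
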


\begin{proof} $(1)\Rightarrow (2)$ Let $A$ be a cyclic submodule of $T(M)$. Then $A$ is a cyclic submodule of $M$, and so $A\cong B \ds M$ by $(a).$ Since $A$ is torsion, $B$ is torsion as well. That is, $B\subseteq T(M)$. As $M=B\oplus C$, by modular law, we get $T(M)=B \oplus C\cap T(M).$
Therefore, $T(M)$ is virtually regular.\\ Now, let us show that $\frac{M}{T(M)}$ is virtually regular. Since $M$ is virtually regular, $M$ has a direct summand isomorphic to $R$. So there is an epimorphism $g: M \to R$. As $R$ is torsionfree, $T(M)\subseteq \Ker(g).$ So that $g$ induces an epimorphism $g': \frac{M}{T(M)}\to R$. Then $g'$ splits by projectivity of $R$. Hence, $\frac{M}{T(M)}$ has a direct summand isomorphic to $R$, and so $\frac{M}{T(M)}$ is virtually regular by proposition \ref{torsion-freecvssoverdomain}.

$(2)\Rightarrow (1)$ Suppose that $R$ is almost maximal Pr\"ufer. Let $A$ be a cyclic submodule of $M$. Then $A\cong R$ or $A\cong \frac{R}{I}$ for some proper ideal $I$ of $R$. Note that in the latter case $A \subseteq T(M).$ First, assume that $A \subseteq T(M).$ Then $A\cong B\ds T(M)$, by $(2)$. Thus $B$ is a pure submodule of $T(M)$, and as $R$ is Pr\"ufer $T(M)$ is a pure submodule of $M$ by \cite[Proposition 8.12]{FuchsAndSalce:ModulesOverNonNoetherianDomains}. Therefore, $B$ is a pure submodule of $M$. On the other hand, as $B$ is isomorphic to  a proper cyclic module, it is  pure-injective by Corollary \ref{cor:propercyclicpureinjective}. Being pure-injective and a pure submodule of $M$, $B$ must be a direct summand of $M$. Thus, we obtain that, $A\cong B \ds M$. \\
Now, assume that $A\cong R.$ Since $\frac{M}{T(M)}$ is torsionfree and virtually regular, it has a direct summand isomorphic to $R$ by proposition \ref{torsion-freecvssoverdomain}. Then there is an epimorphism $f:\frac{M}{T(M)}\to R.$ This leads to an epimorphism $f\pi: M \to R$, where $\pi: M \to \frac{M}{T(M)}$ is the canonical epimorphism. As $R$ is projective, $f\pi$ splits, and so $M$ has a direct summand isomorphic to $R$. Hence $A$ is isomorphic to a direct summand of $M$, and so $M$ is virtually regular. This proves $(1).$
\end{proof}


Over the ring of integers, every proper cyclic is pure-injective (see, \cite[Theorem 30.4]{fuchs}). Hence, we have the following consequence of Theorem \ref{prop:torsiontorsionfreepart}.

\begin{corollary}\label{cor:abeltorsiontorsionfree} An abelian group $M$ is virtually regular if and only if $T(M)$ and $\frac{M}{T(M)}$ are virtually regular.
\end{corollary}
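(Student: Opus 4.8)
The plan is to derive Corollary \ref{cor:abeltorsiontorsionfree} as a direct instance of Theorem \ref{prop:torsiontorsionfreepart}, so the real work is justifying that $\Z$ fits into the hypotheses of the ``moreover'' direction. The implication $(1)\Rightarrow(2)$ requires no additional argument: it holds over every domain, and $\Z$ is a domain, so virtual regularity of an abelian group $M$ forces $T(M)$ and $M/T(M)$ to be virtually regular with no further input.

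For the converse $(2)\Rightarrow(1)$, Theorem \ref{prop:torsiontorsionfreepart} demands that $R$ be an almost maximal Pr\"ufer domain. Rather than verifying this ring-theoretic property for $\Z$ directly, I would trace where almost maximality is actually used in the proof of the theorem: it enters only through Corollary \ref{cor:propercyclicpureinjective}, to guarantee that every proper cyclic module is pure-injective. Thus the cleanest route is to observe that $\Z$ satisfies the one consequence of the hypothesis that the proof consumes. Over $\Z$, every proper cyclic module is a finite cyclic group $\Z/n\Z$, and by \cite[Theorem 30.4]{fuchs} every such module is pure-injective (indeed, finite abelian groups are algebraically compact). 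This replaces the appeal to Corollary \ref{cor:propercyclicpureinjective} in the original argument.

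With pure-injectivity of proper cyclics in hand, the remainder of the proof of $(2)\Rightarrow(1)$ goes through verbatim: given a cyclic submodule $A\subseteq M$, either $A\cong\Z$ or $A\cong\Z/n\Z$ for some $n>0$, the latter landing inside $T(M)$. In the torsion case, $A\cong B\subseteq^{\oplus}T(M)$ by hypothesis, $B$ is pure in $T(M)$, and $T(M)$ is pure in $M$ (here $\Z$, being Pr\"ufer, supplies purity of the torsion part via \cite[Proposition 8.12]{FuchsAndSalce:ModulesOverNonNoetherianDomains}, or one simply notes the torsion subgroup of an abelian group is pure); so $B$ is pure in $M$, and being pure-injective it splits off, giving $A\cong B\subseteq^{\oplus}M$. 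In the free case $A\cong\Z$, virtual regularity of the torsion-free quotient $M/T(M)$ yields a direct summand isomorphic to $\Z$ via Proposition \ref{torsion-freecvssoverdomain}, hence an epimorphism $M\to\Z$ which splits by projectivity, producing the required summand of $M$. Both cases show every cyclic submodule embeds as a direct summand, so $M$ is virtually regular.

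I expect the main obstacle to be purely expository rather than mathematical: the statement as written presents the corollary as following ``over the ring of integers,'' so I must be careful that the only place the almost maximal Pr\"ufer assumption was invoked is the pure-injectivity of proper cyclics, and confirm that $\Z$ delivers exactly this via \cite[Theorem 30.4]{fuchs}. One subtlety worth a sentence is that $\Z$ is a PID and hence trivially Pr\"ufer, so the purity of $T(M)$ in $M$ is unproblematic; the genuine replacement is only the pure-injectivity input, which is why citing \cite[Theorem 30.4]{fuchs} in place of Corollary \ref{cor:propercyclicpureinjective} is the single necessary modification to the argument.
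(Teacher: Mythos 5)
Your proposal is correct and follows essentially the same route as the paper: the paper likewise derives the corollary from Theorem \ref{prop:torsiontorsionfreepart} by observing that over $\Z$ every proper cyclic module is pure-injective (citing \cite[Theorem 30.4]{fuchs}), which is exactly the substitute you make for the almost maximal Pr\"ufer hypothesis. Your version merely spells out in more detail why that is the only consequence of the hypothesis the proof of the theorem actually uses.
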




Let $R$ be a Dedekind domain and $I$ be a nonzero proper  ideal of $R$. Then $I=P_1^{n_1}\cdots P_k ^{n_k},$ where $P_1,\cdots P_k$ are distinct maximal ideals and $n_1,\cdots, n_k$ are positive integers (see \cite[Corollary 7.5]{passman}).

\begin{prop}\label{prop:torsionoverdedekinddomain} Let $R$ be a domain and $M$ be an $R$-module. The following are hold.
\begin{enumerate}

\item[(1)] If $M$ is completely virtually regular, then $T(M)$ is completely virtually regular.
\item[(2)] If $R$ is a Dedekind domain and $M$  a torsion $R$-module, then $M$ is  completely virtually regular if and only if $M$ is semisimple.

\end{enumerate}
\end{prop}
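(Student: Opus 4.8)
The plan is to prove the two parts separately, with almost all of the work concentrated in the forward direction of (2).

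For (1), I would observe that complete virtual regularity passes to submodules for free. Indeed, if $L$ is any submodule of $M$, then every submodule of $L$ is also a submodule of $M$ and is therefore virtually regular by hypothesis; hence $L$ is completely virtually regular. Applying this with $L=T(M)$ gives (1) immediately. For the easy (\emph{if}) direction of (2), suppose $M$ is semisimple. Then every submodule $N$ of $M$ is again semisimple, so every cyclic submodule of $N$ is a direct summand of $N$; in particular $M$ is a regular module in the sense used in Lemma \ref{lem:regularmodulesarecompletelyvirtuallyregular}, and that lemma yields at once that $M$ is completely virtually regular.

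The substance is the (\emph{only if}) direction of (2), which I would argue by contraposition: assuming $M$ is \emph{not} semisimple, I will exhibit a submodule that fails to be virtually regular, contradicting complete virtual regularity. Since $\Soc(M)$ is the largest semisimple submodule of $M$, non-semisimplicity gives an element $x\in M\setminus\Soc(M)$; then $xR\cong\frac{R}{I}$ with $I=\operatorname{ann}_R(x)$, and $xR$ cannot be semisimple, for otherwise $xR\subseteq\Soc(M)$ and $x\in\Soc(M)$. Writing $I=P_1^{n_1}\cdots P_k^{n_k}$ via the ideal factorization recalled before the statement and applying the Chinese Remainder Theorem, we obtain $\frac{R}{I}\cong\bigoplus_j \frac{R}{P_j^{n_j}}$, and non-semisimplicity of $xR$ forces some $n_j\geq 2$. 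Inside that summand the submodule $P_j^{n_j-2}/P_j^{n_j}$ is cyclic of length two; localizing at $P_j$ (a DVR, as $R$ is Dedekind) and multiplying by a suitable power of the uniformizer shows $P_j^{n_j-2}/P_j^{n_j}\cong\frac{R}{P_j^2}$. Thus $M$ contains a submodule $C\cong\frac{R}{P_j^2}$.

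Finally I would invoke Proposition \ref{lem:cyclicvirtuallyregular}: the module $C$ is indecomposable, being uniserial of length two, so if it were virtually regular it would satisfy $C\cong\frac{R}{Q}$ for some prime ideal $Q$; but isomorphic cyclic modules share the same annihilator, which would force $Q=P_j^2$, impossible since $P_j^2$ is not prime. Hence the submodule $C$ of $M$ is not virtually regular, contradicting complete virtual regularity and completing the contrapositive. The main obstacle is precisely locating this copy of $\frac{R}{P_j^2}$ inside $M$ and confirming the isomorphism type of the length-two subquotient; once $C$ is in hand, Proposition \ref{lem:cyclicvirtuallyregular} closes the argument.
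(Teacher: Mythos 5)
Your proposal is correct and follows essentially the same route as the paper: both rest on the factorization $I=P_1^{n_1}\cdots P_k^{n_k}$, the Chinese Remainder decomposition of $R/I$, and Proposition \ref{lem:cyclicvirtuallyregular} applied to an indecomposable cyclic piece, the only difference being that you argue (2) by contraposition and descend to an explicit copy of $R/P_j^{2}$, whereas the paper applies that proposition directly to each summand $R/P_i^{n_i}$ to force $n_i=1$ and then sums the semisimple cyclic submodules. Your treatment of (1) --- submodules trivially inherit complete virtual regularity --- is simpler than the paper's detour through the singular submodule, and your extra descent step in (2) is correct but not needed, since $R/P_j^{n_j}$ is already indecomposable.
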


\begin{proof} $(1)$ For domains, the singular and the torsion submodule coincide. Then proof follows by Proposition \ref{prop:Z(M)virtuallyregular}.

$(2)$ Sufficiency is clear. To prove the necessity, let $C \cong \frac{R}{I}$ be a nonzero cyclic submodule of $M$. Then $I \neq 0$, because $M$ is a torsion module. Thus $I = P_1^{n_1}\cdots P_k ^{n_k}$ for distinct maximal ideals $P_1,\cdots P_k$. Then $$\frac{R}{I} \cong \frac{R}{P_1 ^{n_1}}\oplus \cdots \oplus \frac{R}{{P_k}^{n_k}}.$$
By the assumption, each $\frac{R}{{P_i}^{n_i}}$ is virtually regular. Since $\frac{R}{{P_i}^{n_i}}$ is indecomposable, $P_i^{n_i}$ is a prime ideal by Proposition \ref{lem:cyclicvirtuallyregular}, and so $n_i =1$ for each $i=1,\cdots ,k.$ Therefore, by maximality of $P_i$,  $\frac{R}{I} \cong \frac{R}{P_1}\oplus \cdots \oplus \frac{R}{P_k}$ is semisimple. Hence each cyclic submodule of $M$ is semisimple, and so $M=\bigoplus _{m\in M} mR$ is semisimple.
\end{proof}

Dedekind domains are almost maximal Prüfer domains by \cite[Theorem 3.9]{FuchsAndSalce:ModulesOverNonNoetherianDomains}. This fact together with Proposition \ref{torsion-freecvssoverdomain},  Theorem \ref{prop:torsiontorsionfreepart} and Proposition \ref{prop:torsionoverdedekinddomain} give the following.

\begin{corollary}\label{cor:cvroverDD} Let $R$ be a Dedekind domain and $M$ be a right $R$-module. The following statements are equivalent.
\begin{enumerate}
\item[(1)] $M$ is completely virtually regular.
\item[(2)] $(i)$   $T(M)$ is semisimple and 

\noindent $(ii)$ every submodule $N\subseteq M$ with $N \neq T(N)$ has a direct summand isomorphic to $R$.
\end{enumerate}
\end{corollary}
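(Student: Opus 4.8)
The plan is to establish both implications by reducing everything to the torsion and torsion-free parts of an arbitrary submodule and then invoking the three ingredients flagged just before the statement: Proposition \ref{torsion-freecvssoverdomain} (for torsion-free modules, virtual regularity is equivalent to having a direct summand isomorphic to $R$), Theorem \ref{prop:torsiontorsionfreepart} (virtual regularity passes to and, over an almost maximal Pr\"ufer domain, descends from $T(M)$ and $\frac{M}{T(M)}$), and Proposition \ref{prop:torsionoverdedekinddomain} (for torsion modules over a Dedekind domain, completely virtually regular coincides with semisimple). The key structural fact making both directions of Theorem \ref{prop:torsiontorsionfreepart} available is that a Dedekind domain is almost maximal Pr\"ufer.

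For $(1)\Rightarrow(2)$, I would first note that $T(M)$ is completely virtually regular by Proposition \ref{prop:torsionoverdedekinddomain}(1); since $T(M)$ is torsion over a Dedekind domain, part (2) of the same proposition forces $T(M)$ to be semisimple, which gives $(i)$. For $(ii)$, let $N \subseteq M$ with $N \neq T(N)$. Completeness makes $N$ virtually regular, and choosing $a \in N \setminus T(N)$ yields a torsion-free cyclic submodule $aR \cong R$ because $R$ is a domain; virtual regularity of $N$ then gives $aR \cong L \ds N$ with $L \cong R$, so $N$ has a direct summand isomorphic to $R$.

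For $(2)\Rightarrow(1)$, I would take an arbitrary $N \subseteq M$ and show $N$ is virtually regular through the reverse implication of Theorem \ref{prop:torsiontorsionfreepart}, which requires checking that $T(N)$ and $\frac{N}{T(N)}$ are both virtually regular. Since $T(N) \subseteq T(M)$ and $T(M)$ is semisimple by $(i)$, $T(N)$ is semisimple, hence virtually regular. For the torsion-free module $\frac{N}{T(N)}$, the case $N = T(N)$ is trivial; otherwise $(ii)$ provides a decomposition $N = L \oplus N'$ with $L \cong R$, and the projection $N \to L \cong R$ annihilates $T(N)$ (as $R$ is torsion-free), inducing a split epimorphism $\frac{N}{T(N)} \to R$, so $\frac{N}{T(N)}$ has a free direct summand and is virtually regular by Proposition \ref{torsion-freecvssoverdomain}. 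Theorem \ref{prop:torsiontorsionfreepart} then gives that $N$ is virtually regular, and as $N$ was arbitrary, $M$ is completely virtually regular. I expect the only genuinely delicate step to be this descent of the free summand from $N$ to $\frac{N}{T(N)}$ — verifying that the summand $L \cong R$ survives the quotient by $T(N)$ rather than merely yielding some free quotient of $N$; the remainder is bookkeeping over the cited results, with the reverse direction of Theorem \ref{prop:torsiontorsionfreepart} supplying the essential input that the almost maximal Pr\"ufer hypothesis, automatic for Dedekind domains, guarantees.
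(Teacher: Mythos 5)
Your proposal is correct and follows exactly the route the paper intends: the paper's proof is just the citation of Proposition \ref{torsion-freecvssoverdomain}, Theorem \ref{prop:torsiontorsionfreepart}, Proposition \ref{prop:torsionoverdedekinddomain}, and the fact that Dedekind domains are almost maximal Pr\"ufer, and your argument fills in precisely those details (including the descent of the free summand from $N$ to $N/T(N)$, which mirrors the argument already given inside the proof of Theorem \ref{prop:torsiontorsionfreepart}). No gaps.
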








\section{Valuation Domains}

 In this section, we shall characterize finitely presented  virtually regular, strongly virtually regular  and completely virtually regular modules over valuation domains.
Recall that,  a right $R$-module is \textit{finitely presented} if $M \cong \frac{F}{K}$, where $F$ is a finitely generated free right $R$-module and $K$ a finitely generated submodule of $F$. The following characterization of finitely presented modules over valuation domains is due to Warfield. 

\begin{lemma}\cite[Theorem 1]{war} \label{lem:FPovervaluationdomains} Let $R$ be a valuation domain. An $R$-module $M$ is finitely presented if and only if $$M\cong R^m \oplus \frac{R}{Ra_1}\oplus \cdots \oplus \frac{R}{Ra_n}$$where $Ra_1 \geq Ra_2 \geq \cdots \geq Ra_n.$
\end{lemma}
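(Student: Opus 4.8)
The plan is to prove the nontrivial (``only if'') direction by reducing a presentation matrix of $M$ to diagonal form, the decisive leverage being the total order on principal ideals guaranteed by the valuation hypothesis. The ``if'' direction is routine and needs no ordering assumption: each cyclic module $\frac{R}{Ra_i}$ is finitely presented, being $R$ modulo the (principal, hence finitely generated) submodule $Ra_i$, the summand $R^m$ is finitely generated free, and a finite direct sum of finitely presented modules is again finitely presented.

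For the converse, suppose $M$ is finitely presented, so there is an exact sequence $R^s \xrightarrow{A} R^t \To M \To 0$ in which $A$ is a $t\times s$ matrix over $R$ whose columns generate the relation submodule $K\subseteq R^t$; finite presentation supplies simultaneously the finite free cover $R^t$ and the finitely many generators of $K$, so no Noetherian hypothesis is invoked. I would then produce invertible matrices $U,V$ over $R$ with $UAV=\operatorname{diag}(a_1,\dots,a_r)$ padded by zero rows and columns, where $a_1\mid a_2\mid\cdots\mid a_r$. Since invertible row and column operations are precisely changes of basis of $R^t$ and $R^s$, they preserve the isomorphism type of the cokernel $M$, so it suffices to compute the cokernel of the diagonal form.

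The reduction itself is where the valuation property does the work. Because $R$ is a valuation domain, for any two entries $a,b$ one has $aR\subseteq bR$ or $bR\subseteq aR$, so the finite set of matrix entries is totally ordered by divisibility and contains an entry dividing all the others. I would move such an entry to the $(1,1)$ position by row and column swaps; since it divides every entry in its row and column, the elementary operations $R_i\mapsto R_i-c\,R_1$ and their column analogues clear the first row and first column, producing a block form $\begin{pmatrix} a_{11} & 0\\ 0 & A'\end{pmatrix}$. Crucially, every entry of $A'$ is an $R$-linear combination of entries that $a_{11}$ divides, so $a_{11}$ still divides all entries of $A'$; iterating on $A'$ therefore yields the divisibility chain $a_1\mid a_2\mid\cdots\mid a_r$, that is, $Ra_1\supseteq Ra_2\supseteq\cdots\supseteq Ra_r$.

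Finally I would read off the structure from the diagonal form. The cokernel of $\operatorname{diag}(a_1,\dots,a_r)$ padded with zeros is $\bigoplus_{i=1}^{r}\frac{R}{Ra_i}$ together with one free summand $R$ for each zero row of the reduced matrix; discarding the units among the $a_i$ (for which $\frac{R}{Ra_i}=0$) and collecting the free summands as $R^m$ gives the asserted decomposition, and the chain $Ra_1\geq Ra_2\geq\cdots\geq Ra_n$ is exactly the divisibility chain found above (any residual relabeling is harmless, since the principal ideals are totally ordered). The main obstacle is establishing the diagonal reduction: one must verify that a pivot dividing all remaining entries can always be selected and that this divisibility survives the clearing step, which is precisely where uniseriality of $R$ is indispensable and where the argument would break down over a general B\'ezout or Pr\"ufer domain.
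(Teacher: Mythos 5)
Your argument is correct. Note that the paper itself offers no proof of this lemma: it is quoted verbatim from Warfield with the citation \cite[Theorem 1]{war}, so there is no in-text argument to compare against. Your route is the matrix-theoretic one: reduce a presentation matrix to diagonal form, using the fact that in a valuation domain the finitely many entries generate a chain of principal ideals, so a pivot dividing all other entries exists, survives the clearing step (each new entry is an $R$-combination of multiples of the pivot), and forces the divisibility chain $a_1\mid a_2\mid\cdots\mid a_r$, i.e.\ $Ra_1\supseteq\cdots\supseteq Ra_r$; reading off the cokernel of the diagonal matrix and discarding unit entries gives exactly the asserted form. All the small verifications check out: clearing the first column by rows does not disturb the first row, the subsequent column operations only alter the first row because the first column is already $(a_{11},0,\dots,0)^{T}$, and equivalence $A\mapsto UAV$ preserves the cokernel. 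Warfield's original proof is different in flavor: it is an induction on the number of generators showing that a suitably chosen cyclic submodule (one whose generator has maximal annihilator, detected via pure submodules) splits off as a direct summand. The diagonalization proof you give is more computational and makes the role of uniseriality completely explicit (it is exactly what guarantees a global pivot among the entries), while Warfield's argument is the one that generalizes to broader decomposition theorems; either is a legitimate and complete proof of the statement.
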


 We recall the following lemma, which will be used in the sequel.

\begin{lemma}\label{lemma:primeidealprincipal}(\cite[Proposition 5.2.(f)]{facchini}) Let $R$ be a valuation domain with the unique maximal ideal $P$. If $A$ is a prime ideal of $R$ which is principal, then either $A=P$ or $A=0.$
\end{lemma}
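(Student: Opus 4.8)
The plan is to argue by contradiction, exploiting two facts about a valuation domain $R$: its principal ideals are totally ordered by inclusion (uniseriality), and its unique maximal ideal $P$ consists exactly of the non-units of $R$. Write $A=Ra$; if $a=0$ we are in the trivial alternative $A=0$, so assume $a\neq 0$ and suppose toward a contradiction that $A\neq P$. Since $A$ is a proper ideal of the local ring $R$ we have $A\subseteq P$, hence $A\subsetneq P$, and I would fix an element $p\in P\setminus A$.

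The first substantive step is to compare the principal ideals $Ra$ and $Rp$. By uniseriality either $Ra\subseteq Rp$ or $Rp\subseteq Ra$; the latter would force $p\in Ra=A$, contrary to the choice of $p$, so $Ra\subseteq Rp$, and since $p\notin Ra$ the inclusion is strict, $Ra\subsetneq Rp$. In particular $a\in Rp$, say $a=pb$ with $b\in R$, and moreover $p\notin A$. I then invoke primality of $A$: from $a=pb\in A$ and $p\notin A$ it follows that $b\in A=Ra$, say $b=ca$ with $c\in R$. Substituting gives $a=pca$, and cancelling $a\neq 0$ in the domain $R$ yields $1=pc$, so $p$ is a unit. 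This contradicts $p\in P$, and therefore $A=P$.

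The argument is short, and I do not anticipate a genuine obstacle; the only point requiring care is the bookkeeping of strict versus non-strict inclusions when passing from $p\notin A$ to $Ra\subsetneq Rp$, together with the (equally elementary) identification of $P$ with the set of non-units. Once the total ordering of principal ideals is used correctly, the primality of $A$ and the cancellation property of the domain do the rest.
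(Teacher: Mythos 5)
Your argument is correct and complete: the reduction to $Ra\subsetneq Rp$ via the total ordering of (principal) ideals, the use of primality to get $b\in Ra$, and the cancellation $a=pca\Rightarrow 1=pc$ in the domain all go through, and the contradiction with $p\in P$ being a non-unit is exactly what is needed. Note that the paper does not prove this lemma at all --- it simply cites \cite[Proposition 5.2.(f)]{facchini} --- so there is no in-paper argument to compare against; your elementary, self-contained proof is a perfectly good substitute for the external reference.
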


A module $M$ over a commutative domain is said to be \textit{mixed module}, if $T(M)$ and $\frac{M}{T(M)}$ are both nonzero.  The following result is crucial for determining the structure of virtually regular modules over valuation domains. 

\begin{prop}\label{prop:valuationadmitstorsionvr} Let $R$ be a valuation domain and $P$ be its unique maximal ideal. The following statements are equivalent.

\begin{enumerate}
  \item There is a finitely presented mixed virtually regular $R$-module.
  \item There is a finitely presented nonzero torsion virtually regular $R$-module.
  \item There is a finitely presented nonzero cyclic torsion virtually regular $R$-module.
  \item $P=Rp$ is cyclic.
\end{enumerate}

\end{prop}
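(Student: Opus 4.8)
The plan is to prove the cyclic chain $(4)\Rightarrow(3)\Rightarrow(2)\Rightarrow(1)\Rightarrow(4)$, using throughout that over a valuation domain every finitely generated ideal is principal, so a cyclic finitely presented module is always of the form $R/Rc$ with $c\in R$; it is nonzero and torsion exactly when $c\neq 0$ is a nonunit. For $(4)\Rightarrow(3)$, if $P=Rp$ then $R/Rp=R/P$ is simple, hence its only cyclic submodules are $0$ and itself and it is virtually regular; it is a nonzero cyclic torsion finitely presented module, giving $(3)$. The implication $(3)\Rightarrow(2)$ is immediate, since a cyclic module already witnesses $(2)$.

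For $(2)\Rightarrow(1)$ I would take a nonzero finitely presented torsion virtually regular module $T$ and set $M=T\oplus R$. Then $M$ is finitely presented and mixed, with $T(M)=T\neq 0$ and $M/T(M)\cong R\neq 0$. To check $M$ is virtually regular, consider a cyclic submodule $(t,r)R$. If $r=0$ it lies in $T$ and is isomorphic to a summand of $T$, hence of $M$. If $r\neq 0$, the projection $M\to R$ restricts to an injection on $(t,r)R$ because $R$ is a domain, so $(t,r)R\cong rR\cong R\cong \{0\}\oplus R$, a direct summand of $M$. Thus every cyclic submodule of $M$ is isomorphic to a summand.

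The crux is $(1)\Rightarrow(4)$. Given a finitely presented mixed virtually regular $M$, Theorem \ref{prop:torsiontorsionfreepart} shows $T:=T(M)$ is virtually regular; it is nonzero by mixedness, finitely presented and torsion, so by Lemma \ref{lem:FPovervaluationdomains} we have $T\cong\bigoplus_{i=1}^{n}R/Ra_i$ with $n\geq 1$, each $a_i$ a nonzero nonunit and $Ra_1\supseteq\cdots\supseteq Ra_n$. Each summand is indecomposable with local endomorphism ring $\operatorname{End}(R/Ra_i)\cong R/Ra_i$, a quotient of the local ring $R$, so by the Krull--Schmidt--Azumaya theorem every direct summand of $T$ is a direct sum of copies of the $R/Ra_i$, and a cyclic one is forced by Nakayama's lemma to be a single $R/Ra_j$. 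Fixing $a:=a_n$ and running through the cyclic submodules $Rr/Ra$ of $R/Ra\subseteq T$ with $Ra\subseteq Rr\subsetneq R$, a direct computation gives $\operatorname{ann}(Rr/Ra)=R(a/r)$, and as $Rr$ ranges over the principal ideals between $Ra$ and $R$ these annihilators range over exactly the principal ideals $Rs$ with $Ra\subseteq Rs\subsetneq R$. Virtual regularity makes each $Rr/Ra$ isomorphic to a cyclic summand $R/Ra_j$, forcing its annihilator into the finite set $\{Ra_1,\dots,Ra_n\}$. Hence there are only finitely many principal ideals between $Ra$ and $R$; taking $Rp$ to be the largest of them, every nonunit $t$ satisfies $t\in Rp$ (split into the cases $Ra\subseteq Rt$ and $Rt\subseteq Ra$), so $P=Rp$ is principal.

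The \textbf{main obstacle} is exactly the structural step in $(1)\Rightarrow(4)$: converting the bare hypothesis of virtual regularity into the finiteness of the set of annihilators of cyclic submodules, and then into the existence of a largest proper principal ideal. The decisive leverage is that a cyclic direct summand of the torsion part must coincide with one of the indecomposables $R/Ra_j$, which rests on their local endomorphism rings together with Azumaya's theorem; once finiteness is secured, the passage to $P=Rp$ is elementary. I also note the clean mechanism underlying the whole equivalence: combining Proposition \ref{lem:cyclicvirtuallyregular} with Lemma \ref{lemma:primeidealprincipal} yields $(3)\Rightarrow(4)$ directly, since a virtually regular $R/Rc$ is indecomposable and hence forces $Rc$ to be a nonzero principal prime, i.e. $Rc=P$.
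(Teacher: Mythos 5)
Your proof is correct, but the hard work is placed in a different implication than in the paper. The paper closes the cycle as $(1)\Rightarrow(2)\Rightarrow(3)\Rightarrow(4)\Rightarrow(1)$, and its only nontrivial step, $(3)\Rightarrow(4)$, is dispatched in two lines: a cyclic module over a valuation domain is uniserial, hence indecomposable, so Proposition \ref{lem:cyclicvirtuallyregular} forces the annihilator $Ra$ to be a nonzero prime ideal, and Lemma \ref{lemma:primeidealprincipal} then gives $Ra=P$. You instead run $(4)\Rightarrow(3)\Rightarrow(2)\Rightarrow(1)\Rightarrow(4)$ and attack $(1)\Rightarrow(4)$ head-on: decompose $T(M)$ by Warfield's lemma, use Krull--Schmidt--Azumaya together with Nakayama to identify the nonzero cyclic direct summands of $T(M)$ with the indecomposables $R/Ra_j$, compute the annihilators of the cyclic submodules $Rr/Ra_n$, and conclude that only finitely many principal ideals lie between $Ra_n$ and $R$, the largest of which must then be $P$. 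I checked the two delicate points — that $\operatorname{ann}(Rr/Ra)=Rs$ where $a=rs$, and that every principal ideal $Rs$ with $Ra\subseteq Rs\subsetneq R$ actually occurs as such an annihilator — and both are sound, so the argument is valid; it buys you a proof of $(1)\Rightarrow(4)$ that never passes through the primality of an annihilator, at the cost of invoking the full Krull--Schmidt machinery where the paper needs only indecomposability of a single cyclic module. You yourself observe at the end that Proposition \ref{lem:cyclicvirtuallyregular} plus Lemma \ref{lemma:primeidealprincipal} yields $(3)\Rightarrow(4)$ directly; that is exactly the paper's route and is the shorter one. Your $(2)\Rightarrow(1)$ witness $M=T\oplus R$ is a mild generalization of the paper's $(4)\Rightarrow(1)$ witness $R\oplus R/Rp$, verified by the same case split on whether the cyclic submodule is torsion.
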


\begin{proof}$(1) \Rightarrow (2)$ By Theorem \ref{prop:torsiontorsionfreepart}.

$(2) \Rightarrow (3)$ is clear.

$(3) \Rightarrow (4)$ Suppose $M=\frac{R}{Ra}$ is virtually regular for some nonunit element $a$ of $R$. Since $R$ is a valuation domain, every cyclic $R$-module is uniserial. Thus $\frac{R}{Ra}$ indecomposable, and so $Ra$ is a prime ideal of $R$ by Proposition \ref{lem:cyclicvirtuallyregular}. By lemma \ref{lemma:primeidealprincipal}, a nonzero cyclic prime ideal is equal to $P$. Therefore $P=Ra$. Setting $P=Rp$, gives $(4).$

$(4) \Rightarrow (1)$ Consider the $R$-module $M=R\oplus \frac{R}{Rp}$, where $P=Rp$. Then $M$ is a mixed finitely presented $R$-module. Note that $R$ and $\frac{R}{Rp}$ are indecomposable modules with local endomorphism rings. Suppose $A\cong \frac{R}{I}$ is a nonzero cyclic submodule of $M$. If $I=0,$ then $A\cong R$. If $I\neq 0$, then $A \subseteq T(M)=0\oplus\frac{R}{Rp}.$ Then $A\cong \frac{R}{Rp}.$ In both cases, $A$ is isomorphic to a direct summand of $M$. Hence $M$ is virtually regular.

\end{proof}

\begin{lemma}\label{lem:ppowersarevr} Let $R$ be a valuation domain and $P=Rp$ be its unique maximal ideal. Then $$M=R^{(I)} \oplus (\frac{R}{Rp})^{(I_1)} \oplus (\frac{R}{Rp^2})^{(I_2)} \oplus \cdots \oplus (\frac{R}{Rp^k})^{(I_k)}\oplus \cdots=R^{(I)} \oplus (\bigoplus_{k=1}^{\infty}(\frac{R}{Rp^k})^{(I_k)})$$is virtually regular for each nonempty index sets $\,I_1,\,I_2,\cdots$.
\end{lemma}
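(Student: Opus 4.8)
The plan is to verify the defining property directly: take an arbitrary cyclic submodule $C=xR$ of $M$ and exhibit it as isomorphic to a direct summand. Since $M$ is a direct sum, any $x\in M$ has only finitely many nonzero coordinates $c_1,\dots,c_t$, and the map $R\to xR$, $r\mapsto xr$, gives $xR\cong R/\operatorname{ann}_R(x)$ with $\operatorname{ann}_R(x)=\bigcap_{i}\operatorname{ann}_R(c_i)$. Thus the whole problem reduces to computing these coordinate annihilators and matching $R/\operatorname{ann}_R(x)$ with a summand of $M$. First I would dispose of the case where $x$ has a nonzero coordinate in the free part $R^{(I)}$: that coordinate has annihilator $0$, hence $\operatorname{ann}_R(x)=0$ and $xR\cong R$; since such an $x$ witnesses $I\neq\emptyset$, $R$ is a direct summand of $M$ and this case is settled.

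The heart of the argument is the torsion case, and the key step I would isolate is the following observation about a valuation domain with principal maximal ideal $P=Rp$: if $r\in R$ satisfies $r\notin Rp^{k}$, then $Rr=Rp^{j}$ for some $0\le j<k$. Indeed, since the ideals of $R$ are totally ordered and $r\notin Rp^{k}$, we have $Rp^{k}\subsetneq Rr$; letting $j$ be the largest index with $r\in Rp^{j}$ (so $0\le j<k$), write $r=p^{j}t$; then $r\notin Rp^{j+1}$ forces $t\notin P$, so $t$ is a unit and $Rr=Rp^{j}$. Consequently, for a nonzero coordinate $c=r+Rp^{k}\in R/Rp^{k}$ one computes $\operatorname{ann}_R(c)=(Rp^{k}:r)=Rp^{k-j}$ with $1\le k-j\le k$, a genuine power of $p$. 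This is precisely where principality of $P$ is used: without it the annihilator of a coordinate could fail to be a power of $p$ (exactly what occurs for elements of $\bigcap_{k}Rp^{k}$), and the matching with a summand would break down.

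Finally, for torsion $x$ each coordinate annihilator is a power $Rp^{m_i}$ with $m_i\ge 1$; as the ideals $Rp^{m_i}$ are totally ordered, their intersection is again a power, so $\operatorname{ann}_R(x)=Rp^{m}$ with $m=\max_i m_i\ge 1$ (finite, since $x$ has finite support). Hence $xR\cong R/Rp^{m}$, and because every index set $I_m$ is nonempty by hypothesis, $(R/Rp^{m})^{(I_m)}$ contains a copy of $R/Rp^{m}$ as a direct summand, so $R/Rp^{m}$ is a direct summand of $M$. This covers every case and shows $M$ is virtually regular.

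I expect the main obstacle to be the key step of the middle paragraph: establishing that the relevant coordinate annihilators are exactly the powers $Rp^{k-j}$. This is what forces all cyclic submodules into the form $R/Rp^{m}$ (or $R$), and it also explains why the hypotheses cannot be weakened, namely that $P$ be principal and that \emph{all} the index sets $I_k$ be nonempty; for instance $M=R/Rp^{2}$ alone fails to be virtually regular, since its cyclic submodule $Rp/Rp^{2}\cong R/Rp$ is not isomorphic to any direct summand of the indecomposable module $R/Rp^{2}$.
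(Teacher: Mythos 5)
Your proof is correct and follows essentially the same route as the paper: reduce to showing that a cyclic submodule $xR$ is either isomorphic to $R$ (when $x$ has a nonzero free coordinate) or has annihilator exactly $Rp^{m}$ for some $m\geq 1$, hence is isomorphic to the summand $R/Rp^{m}$. The paper asserts $\operatorname{ann}_R(A)=Rp^{t}$ for the least $t$ with $p^{t}A=0$ without detail, whereas you justify it coordinate-wise via $(Rp^{k}:r)=Rp^{k-j}$; this is a welcome filling-in of the same step, not a different argument.
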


\begin{proof}Let $A$ be a cyclic submodule of $M$. Then $A\cong R$ or $A\cong \frac{R}{L} \subseteq T(M).$ In the former case, clearly $A$ is isomorphic to a direct summand of $M$. In the latter case, let $t$ be the  smallest positive integer such that $p^t A=0$. Then $L=ann_R(A)=Rp^t$, and so $A\cong \frac{R}{Rp^t}$, and so $A$ is isomorphic to a direct summand of $M$. Therefore, $M$ is virtually regular.
\end{proof}

In the following theorem, which is one of the main result of the article, we characterize finitely presented  virtually regular modules over valuation domains. As we see, the unique maximal ideal $P$ of the valuation domain $R$ plays a crucial role in this characterization. 

\begin{thm}\label{thm:fpvirtuallyregularovervd}  Let $R$ be a valuation domain and $P$ be its unique maximal ideal. Then

\begin{enumerate}
\item[(1)] Every free $R$-module is virtually regular.
 \item[(2)] Finitely presented virtually regular modules are free if and only if P is not principal.
 \item[(3)]   $P=Rp$ is principal if and only if  finitely presented  virtually regular modules are of the form $$R^n \oplus (\frac{R}{Rp})^{n_1} \oplus (\frac{R}{Rp^2})^{n_2} \oplus \cdots \oplus (\frac{R}{Rp^k})^{n_k}$$for nonnegative integers $n,\,k,\,n_1,\,n_2,\cdots ,n_k.$
\end{enumerate}

\end{thm}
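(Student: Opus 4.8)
The plan is to treat the three parts in order, leaning on Warfield's decomposition (Lemma \ref{lem:FPovervaluationdomains}), the torsion/torsion-free splitting (Theorem \ref{prop:torsiontorsionfreepart}), the indecomposable dichotomy (Proposition \ref{lem:cyclicvirtuallyregular}), and the existence criterion (Proposition \ref{prop:valuationadmitstorsionvr}); the substance lies in the forward direction of (3). For (1), let $M$ be free and $aR$ a nonzero cyclic submodule. Since $R$ is a domain and $M$ is torsion-free, $aR\cong R$, and $R$ is a direct summand of $M$; hence $aR$ is isomorphic to a direct summand and $M$ is virtually regular, the zero submodule being trivially a summand.

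For (2) I would argue both implications against Proposition \ref{prop:valuationadmitstorsionvr}. If $P=Rp$ is principal, that proposition produces a nonzero finitely presented torsion virtually regular module, which is not free, so the finitely presented virtually regular modules are not all free; this yields $(\Rightarrow)$ by contraposition. Conversely, writing a finitely presented virtually regular $M$ via Warfield as $R^m\oplus\bigoplus_i R/Ra_i$ with each $a_i$ a nonzero nonunit, Theorem \ref{prop:torsiontorsionfreepart} shows the torsion part $T(M)=\bigoplus_i R/Ra_i$ is again virtually regular; were it nonzero, Proposition \ref{prop:valuationadmitstorsionvr} would force $P$ principal, so when $P$ is not principal the torsion part vanishes and $M\cong R^m$ is free.

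For (3), the converse inclusion (every module of the displayed form is finitely presented and virtually regular) is Warfield together with Lemma \ref{lem:ppowersarevr}, and the implication ``displayed form $\Rightarrow$ $P$ principal'' holds because $R/Rp$ then occurs as a nonzero finitely presented torsion virtually regular module, so Proposition \ref{prop:valuationadmitstorsionvr} applies. The heart of the matter is ``$P=Rp$ principal $\Rightarrow$ every finitely presented virtually regular $M$ has the displayed form.'' Starting from Warfield's $M\cong R^n\oplus\bigoplus_{i=1}^s R/Ra_i$ (chain $Ra_1\geq\cdots\geq Ra_s$, all $a_i$ nonzero nonunits), I would analyze the cyclic submodules of each summand $R/Ra_i$: over a valuation domain these are exactly $Rc/Ra_i\cong R/R(a_i/c)$ for divisors $c\mid a_i$, so they run through all $R/Rd$ with $Ra_i\subseteq Rd\subsetneq R$. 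Virtual regularity forces each such nonzero $R/Rd$ to be isomorphic to a direct summand of $M$; since $R/Rd$ is indecomposable and torsion, and all the summands of $M$ have local endomorphism rings, the Krull--Schmidt--Azumaya theorem identifies it with some $R/Ra_j$, whence $Rd\in\{Ra_1,\ldots,Ra_s\}$. Thus for each $i$ only finitely many principal ideals lie between $Ra_i$ and $R$.

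Finally I would convert this finiteness into the claim that each $Ra_i$ is a power of $Rp$. Since $P=Rp$ is the maximal ideal, $p$ realizes the smallest positive value $v(p)$ in the value group; as only finitely many of $v(p),v(p^2),\ldots$ lie below $v(a_i)$, there is a largest $k_i$ with $v(p^{k_i})\leq v(a_i)$, and writing $a_i=p^{k_i}d$ the factor $d$ satisfies $0\leq v(d)<v(p)$, forcing $d$ to be a unit and $Ra_i=Rp^{k_i}$. Assembling the summands gives the displayed form. I expect the main obstacle to be precisely this last valuation-theoretic step: one must actually invoke the finiteness extracted from virtual regularity to exclude pathological value groups (for instance lexicographic $\Z^2$, where $p$ still has smallest positive value but the multiples $k\,v(p)$ stay bounded), since without it $v(a_i)$ need not be a multiple of $v(p)$ and the classification would break down.
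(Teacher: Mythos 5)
Your proof is correct, and for the substantive direction of (3) it takes a genuinely different route from the paper's. Both arguments share the same skeleton: Warfield's decomposition $M\cong R^n\oplus\bigoplus_i R/Ra_i$, the reduction to $T(M)$ via Theorem \ref{prop:torsiontorsionfreepart}, and the Krull--Schmidt--Azumaya identification of an indecomposable cyclic torsion direct summand with one of the $R/Ra_j$. From there the paper first proves that $Ra_1$ is a prime ideal by an annihilator computation (so $Ra_1=Rp$ by Lemma \ref{lemma:primeidealprincipal}) and then runs an induction on $k$, using $\Soc(R/Ra_k)$ and a composition-length count on $Rp/Ra_k$ to force $Ra_k=Rp^k$. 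You instead distill virtual regularity into a single finiteness statement --- every principal ideal $Rd$ with $Ra_i\subseteq Rd\subsetneq R$ is realized by a cyclic submodule $Rc/Ra_i\cong R/Rd$ and hence equals some $Ra_j$, so only finitely many principal ideals sit between $Ra_i$ and $R$ --- and then finish by pure value-group arithmetic: since $v(p)$ is the least positive value, the finiteness guarantees a largest $k_i$ with $k_i\,v(p)\le v(a_i)$, and the cofactor $d$ in $a_i=p^{k_i}d$ has value in $[0,v(p))$, hence is a unit and $Ra_i=Rp^{k_i}$. Your aside about the lexicographically ordered value group $\Z^2$ is exactly the right sanity check, since it shows the finiteness (not merely the principality of $P$) is what is doing the work. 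The two proofs cost about the same; yours concentrates all the module theory in one place and replaces the paper's somewhat delicate socle/length induction with elementary valuation arithmetic, which is arguably easier to verify. Parts (1) and (2) coincide with the paper's treatment.
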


\begin{proof} $(1)$ By Proposition \ref{torsion-freecvssoverdomain}, every free $R$-module is virtually regular. 

$(2)$ Follows by $(1)$ and  Proposition \ref{prop:valuationadmitstorsionvr}.

$(3)$ Sufficiency follows from Proposition \ref{prop:valuationadmitstorsionvr}. For the necessity, assume that $P=Rp$ is principal and $$M\cong R^n \oplus (\frac{R}{Ra_1})^{n_1} \oplus \cdots \oplus (\frac{R}{Ra_k})^{n_k}$$ is a finitely presented virtually regular $R$-module, where $R > Ra_1 > Ra_2 > \cdots > Ra_k$ and $n_1,\cdots , n_k$ are positive integers.  Note that, by Proposition \ref{prop:torsiontorsionfreepart}, $$T(M)= (\frac{R}{Ra_1})^{n_1} \oplus \cdots \oplus (\frac{R}{Ra_k})^{n_k}$$ is virtually regular. We shall prove that, $Ra_i=Rp ^i$ for each $i=1,\cdots ,k$. First we claim that, $Ra_1$ is a prime ideal of $R$. For this, suppose $ab\in Ra_1$ and $a \notin Ra_1$. Then $A=R(a+Ra_1)$ is a cyclic submodule of $\frac{R}{Ra_1}.$ Identify $A$ with $i_1(A)$, where $$i_1: \frac{R}{Ra_1} \to (\frac{R}{Ra_1})^{n_1} \oplus \cdots \oplus (\frac{R}{Ra_k})^{n_k}=T(M)$$ is the canonical embedding.  Then $i_1(A)$ is a cyclic submodule of $T(M)$, and so $i_1(A)\cong B \ds T(M)$ because $T(M)$ is virtually regular. Clearly $B$ is cyclic, and as $R$ is a valuation domain it is indecomposable. Then by the Krull-Schmidt Theorem \cite[Theorem 9.8]{FuchsAndSalce:ModulesOverNonNoetherianDomains}, $B\cong \frac{R}{Ra_i}$ for some $i=1,\cdots, k.$ \\Since $A\cong i_1(A)\cong B$, we have $(Ra_1)B=0$. So that $Ra_1 \subseteq Ra_i=ann_R(B)$. Thus $Ra_1=Ra_i.$ Now $ann_R(A)=ann_R(i_1(A))=ann_R(B)=Ra_1$  and $bA=bR(a+Ra_1)=0$ together imply that, $b\in ann_R(A)=Ra_1.$ Thus $Ra_1$ is a nonzero prime ideal of $R$. Therefore $Ra_1=Rp$ by Lemma \ref{lemma:primeidealprincipal}. Now we proceed by induction on $k$. Suppose the claim is true for $k-1$, and $$T(M)=(\frac{R}{Rp})^{n_1} \oplus (\frac{R}{Rp^2})^{n_2}\oplus \cdots \oplus (\frac{R}{Rp^{k-1}})^{n_{k-1}}\oplus (\frac{R}{Ra_k})^{n_k}.$$ We need to prove that $Ra_k =Rp^k$. By Lemma \ref{lem:ppowersarevr}, the submodule $(\frac{R}{Rp})^{n_1} \oplus (\frac{R}{Rp^2})^{n_2}\oplus \cdots \oplus (\frac{R}{Rp^{k-1}})^{n_{k-1}}$ of $T(M)$ is virtually regular. Let $C$ be a nonzero cyclic submodule of $\frac{R}{Ra_k}$. If $i_k: \frac{R}{Ra_k} \to T(M)$ is the canonical embedding, then $C\cong i_k(C)$ is a cyclic submodule of $T(M).$ Thus $i_k(C)\cong D \ds T(M)$, because $T(M)$ is virtually regular. Now, $D$ is cyclic and indecomposable direct summand of $T(M)$, and so $D\cong \frac{R}{Ra_k}$ or $D \cong \frac{R}{Rp^i}$ for some $i=1,\cdots, k-1$. Note that $C\cong i_k(C) \cong D$. If $\Soc (\frac{R}{Ra_k})=0$, then we must have $C\cong D\cong \frac{R}{Ra_k}$, because $\Soc (\frac{R}{Rp^i })\neq 0$ for each $i=1,\cdots, k-1$. Thus, in this case, $C\cong {\frac{R}{Ra_k}}$ for each nonzero cyclic submodule $C$ of $\frac{R}{Ra_k}$, as well. Then $Ra_k$ is a prime ideal of $R$ by Proposition \ref{lem:cyclicvirtuallyregular}. So that $Ra_k=Rp$ by Lemma \ref{lemma:primeidealprincipal}, a contradiction. Hence, we must have $\Soc(\frac{R}{Ra_k})=\frac{S}{Ra_k} \neq 0$. Note that, since $J(R)=Rp$ and $\frac{S}{Ra_k}$ is simple, $Rp(\frac{S}{Ra_k})=0.$ Thus $S\subseteq ann_R(\frac{Rp}{Ra_k})$ and $S\frac{R}{Ra_k}=\frac{S}{Ra_k}\neq 0$. Therefore, $\frac{R}{Ra_k}$ and $\frac{Rp}{Ra_k}$ are not isomorphic.  This implies that $i_k(\frac{Rp}{Ra_k})\cong \frac{R}{Rp^{k-1}}$, by the virtually regular assumption. So, the composition length of $\frac{Rp}{Ra_k}$   is $k-1.$  From the short exact sequence $$0 \to \frac{Rp}{Ra_k} \to \frac{R}{Ra_k}\to \frac{R}{Rp} \to 0,$$ we obtain that $p^kR=p^{k-1}(Rp)\subseteq Ra_k$, and the composition length of $\frac{R}{Ra_k}$ is $p^k$. Thus we must have $\frac{R}{Ra_k}\cong \frac{R}{Rp^k}$. This completes the proof.
\end{proof}

\begin{remark}
In \cite{svss}, the authors ask whether torsion-free virtually semisimple modules are free over a DVR. We do not know the answer to this question but the answer of the corresponding question for virtually regular and strongly virtually regular modules is no. For example, let $R$ be a DVR with quotient ring $K$. Consider the torsion-free modules $M=K \oplus R$ and $ N=K\oplus R^{(I)}$ for an infinite index set $I$. Then $M$ and $N$ are not free $R$-modules. Clearly, $M$ is virtually regular by Proposition \ref{torsion-freecvssoverdomain}. Let $B$ be  a finitely generated submodule of $N$. Then $B\cong R^n$ for some positive integer $n$, and so $B$ is isomorphic to a direct summand of $N$. Therefore $N$ is strongly virtually  regular.
\end{remark}

Theorem \ref{thm:fpvirtuallyregularovervd} in hand, we could obtain the structure of finitely presented strongly virtually regular and completely virtually regular  modules.

\begin{prop}\label{prop:svrovervd} Let $R$ be a valuation domain with unique maximal ideal $P$, and let $M$ be a finitely presented $R$-module. Then

\begin{enumerate}
\item[(1)] If $P$ is not principal, then $M$ is strongly virtually regular if and only if $M\cong R^n$.

\item[(2)] If $P=Rp$ is principal, then $M$ is strongly virtually regular if and only if $M\cong R^n \oplus (\frac{R}{Rp})^m$, where $n,\,m$ are nonnegative integers.
\end{enumerate}

\end{prop}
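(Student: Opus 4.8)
The plan is to lean on Theorem \ref{thm:fpvirtuallyregularovervd}, since a strongly virtually regular module is in particular virtually regular. Hence, if $P$ is not principal, part $(2)$ of that theorem already forces $M\cong R^n$, so the content of part $(1)$ reduces to the converse that $R^n$ is strongly virtually regular. If $P=Rp$ is principal, Theorem \ref{thm:fpvirtuallyregularovervd}$(3)$ tells us a strongly virtually regular $M$ must have the shape $R^n\oplus\bigoplus_{i\ge 1}(\frac{R}{Rp^i})^{n_i}$, so the real task in part $(2)$ is to decide which of these are strongly virtually regular. I would prove both converses at once by a single splitting argument, and then handle the forward direction of $(2)$ by ruling out every summand $\frac{R}{Rp^i}$ with $i\ge 2$.

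For the sufficiency (both parts at once, with $m=0$ giving part $(1)$), take $M\cong R^n\oplus(\frac{R}{Rp})^m$ and a finitely generated submodule $N$. First split off the torsion: $T(N)=N\cap T(M)$ lies inside $T(M)\cong(\frac{R}{Rp})^m$, a vector space over the field $R/Rp$, so $T(N)\cong(\frac{R}{Rp})^s$ with $s\le m$. Next, $N/T(N)$ embeds in $M/T(M)\cong R^n$ and is finitely generated and torsion-free, hence free of rank $r\le n$, because finitely generated torsion-free modules over a valuation domain are free. Freeness of $N/T(N)$ splits the sequence $0\to T(N)\to N\to N/T(N)\to 0$, yielding $N\cong R^r\oplus(\frac{R}{Rp})^s$ with $r\le n$ and $s\le m$; this is plainly isomorphic to a direct summand of $R^n\oplus(\frac{R}{Rp})^m=M$, so $M$ is strongly virtually regular.

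For the necessity in part $(2)$, suppose $M$ is strongly virtually regular with decomposition as above; I must show $n_i=0$ for all $i\ge 2$. The obstruction I would use is the socle. Since $\Soc(\frac{R}{Rp^i})\cong\frac{R}{Rp}$ for every $i\ge 1$ while the free part has zero socle, $\Soc(M)$ is a finitely generated submodule with $\Soc(M)\cong(\frac{R}{Rp})^{\ell}$, where $\ell=\sum_i n_i$. By hypothesis $\Soc(M)\cong K$ for some $K\ds M$; being torsion, $K$ lies in $T(M)=\bigoplus_i(\frac{R}{Rp^i})^{n_i}$ and is a direct summand of it, so the Krull--Schmidt Theorem (the uniserial modules $\frac{R}{Rp^i}$ have local endomorphism rings) gives $K\cong\bigoplus_i(\frac{R}{Rp^i})^{m_i}$ with $m_i\le n_i$. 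Matching this against $K\cong(\frac{R}{Rp})^{\ell}$ forces $\ell\le n_1$, whereas $\ell=\sum_i n_i\ge n_1+n_j>n_1$ as soon as some $n_j\ne0$ with $j\ge 2$ --- a contradiction. Thus $M\cong R^n\oplus(\frac{R}{Rp})^{n_1}$, and putting $m=n_1$ completes the necessity.

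The routine ingredients are the splitting argument and the freeness of finitely generated torsion-free modules over a valuation domain. The main obstacle, and the only genuinely delicate step, is the necessity in $(2)$: one must pick the right finitely generated submodule --- the socle, which generalizes the obstruction $\Z_2\oplus 2\Z_4\subseteq\Z_2\oplus\Z_4$ seen earlier --- and then use the Krull--Schmidt description of the direct summands of a torsion module to convert the presence of a high power $\frac{R}{Rp^i}$ into the numerical impossibility $\ell\le n_1<\ell$.
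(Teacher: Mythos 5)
Your proof is correct and follows essentially the same route as the paper's: necessity is deduced from Theorem \ref{thm:fpvirtuallyregularovervd} together with the socle $\Soc(M)\cong(\frac{R}{Rp})^{n_1+\cdots+n_k}$ as the finitely generated obstruction (the paper phrases the Krull--Schmidt step as ``a semisimple direct summand of $M$ embeds in $(\frac{R}{Rp})^{n_1}$''), and sufficiency by showing every finitely generated submodule of $R^n\oplus(\frac{R}{Rp})^m$ has the form $R^r\oplus(\frac{R}{Rp})^s$. Your torsion/torsion-free splitting argument merely fills in a step the paper asserts without proof, so there is no substantive difference.
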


\begin{proof} $(1)$ Strongly virtually regular modules are virtually regular. Then  necessity follows by Theorem \ref{thm:fpvirtuallyregularovervd}(2). For the sufficiency suppose $M \cong R^n$, and let $N$ be a finitely generated submodule of $M$. Since valuation domains are semihereditary, $N$ is projective. As $R$ is a local ring, $N\cong R^m$ for some $m \leq n$. Hence $N$ is isomorphic to a submodule of $M$, and so $M$ is strongly virtually regular.

$(2)$ Suppose $M$ is strongly virtually regular, then by Theorem \ref{thm:fpvirtuallyregularovervd}(3), $$M \cong R^n \oplus (\frac{R}{Rp})^{n_1} \oplus (\frac{R}{Rp^2})^{n_2} \oplus \cdots \oplus (\frac{R}{Rp^k})^{n_k}$$for nonnegative integers $n,\,k,\,n_1,\,n_2,\cdots ,n_k.$   Then $\Soc (M)\cong (\frac{R}{Rp})^{n_1 +\cdots +n_k}.$  Assume that $k \geq 2.$ If $N\ds M$ and $N$ is semisimple, then $N$ is isomorphic to  a submodule of $(\frac{R}{Rp})^{n_1}$.  Thus, $M$ has no direct summand isomorphic to $\Soc(M)$. Hence, as $\Soc(M)$ is finitely generated, $M$ is not strongly virtually regular. Therefore, we must have $k\leq 1$, and so $M\cong  R^n \oplus (\frac{R}{Rp})^{m}$ has the desired decomposition. This proves the necessity.

For the sufficiency, suppose $M\cong R^n \oplus (\frac{R}{Rp})^m$. Let $N$ be a finitely generated submodule of $M$. Without loss of generality, we may assume that $N \subseteq R^n \oplus (\frac{R}{Rp})^m.$ Thus $N\cong R^k \oplus (\frac{R}{Rp})^l$ for some nonnegative integers $k \leq n$ and $l \leq m.$ In this case clearly, $N$ is isomorphic to a direct summand of $R^n \oplus (\frac{R}{Rp})^m$, and so also isomorphic to  a direct summand of $M$. Thus $M$ is strongly virtually regular.
\end{proof}

\begin{prop}\label{prop:cvrovervaluationdomains} Let $R$ be a valuation domain with the unique maximal ideal $P$ and $M$ be a nonzero finitely presented  $R$-module. 

\begin{enumerate}

\item[(1)] If $M$ is completely virtually regular, then $P=Rp$ is principal.

\item[(2)] If $R$ is a DVR, then $M$ is completely virtually regular  if and only if $M\cong R^n \oplus (\frac{R}{Rp})^{m},$ where $n,\,m$ are nonnegative integers. 

\item[(3)]  If $R$ is not a DVR,  then $M$ is completely virtually regular if and only if  $M \cong (\frac{R}{Rp})^{m}$.

\end{enumerate}

\end{prop}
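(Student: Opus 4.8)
The plan is to feed the virtually regular structure theorem into each case and then lean on two soft facts: that a submodule of a completely virtually regular module is again completely virtually regular (immediate from the definition, since a submodule of a submodule of $M$ is a submodule of $M$), and the classification of indecomposable virtually regular modules in Proposition \ref{lem:cyclicvirtuallyregular}.

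For (1) I would argue contrapositively. If $P$ is not principal, then since $M$ is in particular finitely presented and virtually regular, Theorem \ref{thm:fpvirtuallyregularovervd}(2) forces $M \cong R^n$ with $n \geq 1$. The maximal ideal $P$ then sits inside a free summand $R$ of $M$, so $P$ is a submodule of $M$ and hence virtually regular. But $P$ is a nonzero ideal of a valuation domain, so it is uniserial, hence indecomposable and torsion-free; Proposition \ref{lem:cyclicvirtuallyregular} gives $P \cong \frac{R}{Q}$ for a prime $Q$, and torsion-freeness forces $Q=0$, i.e. $P \cong R$ is principal, a contradiction. Thus $P$ must be principal.

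For the forward directions of (2) and (3) I would start from Theorem \ref{thm:fpvirtuallyregularovervd}(3): once (1) supplies $P=Rp$ principal, $M$ (being virtually regular) has the staircase form $R^n \oplus \bigoplus_{i=1}^{k}(\frac{R}{Rp^i})^{n_i}$ with each $\frac{R}{Rp^i}$ genuinely occurring as a summand. The key reduction is that the top torsion summand $\frac{R}{Rp^k}$ is a submodule of $M$, hence completely virtually regular, hence virtually regular; but $\frac{R}{Rp^k}$ is indecomposable, so Proposition \ref{lem:cyclicvirtuallyregular} together with Lemma \ref{lemma:primeidealprincipal} forces $Rp^k$ to be prime, and therefore $k=1$. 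This collapses $M$ to $R^n \oplus (\frac{R}{Rp})^m$, giving the DVR case (2). In case (3) I would additionally kill the free part: assuming $P\neq 0$ (the field case being degenerate, as there $\frac{R}{Rp}\cong R$), if $n\geq 1$ then $R$ is a summand, hence completely virtually regular, hence a PID by Proposition \ref{prop:Rvirtuallyregular}(3); but a non-field valuation domain that is a PID is a DVR, contradicting the hypothesis, so $n=0$ and $M \cong (\frac{R}{Rp})^m$.

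The converse directions are comparatively routine. In (3), $(\frac{R}{Rp})^m$ is semisimple (as $Rp=P$ is maximal), so every submodule is semisimple and thus virtually regular, yielding complete virtual regularity. In (2), $R^n \oplus (\frac{R}{Rp})^m$ is handled by Corollary \ref{cor:cvroverDD}, a DVR being a Dedekind domain: condition (i) holds because $T(M)=(\frac{R}{Rp})^m$ is semisimple, and condition (ii) holds because any non-torsion submodule $N$ has a free quotient $\frac{N}{T(N)}$ over the PID $R$, which splits off and provides a direct summand isomorphic to $R$. I expect the main obstacle to lie entirely in the forward reduction, namely justifying that no higher power $\frac{R}{Rp^k}$ with $k\geq 2$, and, in the non-DVR case, no free summand, can survive; both hinge on the hereditary nature of complete virtual regularity and on the indecomposable classification, so the real work is in assembling these facts correctly rather than in any computation.
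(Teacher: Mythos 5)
Your proposal is correct. Parts (2) and (3) follow the paper's proof essentially verbatim: reduce via Theorem \ref{thm:fpvirtuallyregularovervd}(3) to the staircase form, observe that the top summand $\frac{R}{Rp^k}$ is a submodule of $M$ and hence virtually regular and indecomposable, invoke Proposition \ref{lem:cyclicvirtuallyregular} and Lemma \ref{lemma:primeidealprincipal} to force $k=1$, and in the non-DVR case kill the free part via Proposition \ref{prop:Rvirtuallyregular}(3); the converse of (2) is handled by Corollary \ref{cor:cvroverDD} in both treatments (you merely spell out the verification of its two conditions, which the paper leaves implicit). Where you genuinely diverge is part (1): the paper splits into cases according to whether $M$ has a non-torsion element (then a copy of $R$ sits inside $M$, so $R$ is completely virtually regular and hence a PID) or nonzero torsion (then Proposition \ref{prop:valuationadmitstorsionvr} applies), whereas you argue contrapositively, using Theorem \ref{thm:fpvirtuallyregularovervd}(2) to force $M\cong R^n$ when $P$ is not principal and then applying the indecomposable classification directly to the uniserial submodule $P\subseteq R\subseteq M$ to conclude $P\cong R/Q$ with $Q=0$, i.e.\ $P$ principal. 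Your route is a clean one-case argument that exploits the ``completely'' hypothesis on the ideal $P$ itself rather than on the ring or on a torsion summand; the paper's route avoids invoking the structure theorem for part (1) and instead reuses Propositions \ref{prop:Rvirtuallyregular}(3) and \ref{prop:valuationadmitstorsionvr}, which it has already established independently. Both are sound, and your handling of the degenerate field case is a harmless extra precaution.
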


\begin{proof} $(1)$ Suppose $M$ is  completely virtually regular. Then $M$ is virtually regular. Since $M$ is nonzero, $M \neq T(M)$ or $T(M) \neq 0$. If $M \neq T(M)$, then $M$ has a direct summand isomorphic to $R$ by Proposition \ref{torsion-freecvssoverdomain}. Then $R$ is virtually regular by the assumption. Therefore $R$ is PID by Proposition \ref{prop:Rvirtuallyregular}(3), and so $P$ is principal. Now, if $T(M) \neq 0$, then $P$ is principal by Proposition \ref{prop:valuationadmitstorsionvr}. Hence $(1)$ follows.

$(2)$ Suppose $R$ is a DVR, and $M$ completely virtually regular.  Then $M$ is virtually regular, and so  $M \cong (\frac{R}{Rp})^{n_1} \oplus (\frac{R}{Rp^2})^{n_2} \oplus \cdots \oplus (\frac{R}{Rp^k})^{n_k}$ for nonnegative integers $\,k,\,n_1,\,n_2,\cdots ,n_k$ by Theorem \ref{thm:fpvirtuallyregularovervd}.  Assume that $k\geq 1$. By the assumption,  $\frac{R}{Rp^k}$ is  virtually regular as it is isomorphic to a submodule of $M$.   Then $Rp^k$ is a prime ideal of $R$ by Proposition \ref{lem:cyclicvirtuallyregular}, because $\frac{R}{Rp^k}$ is indecomposable. Therefore $k=1$, and so $M\cong R^n \oplus (\frac{R}{Rp})^{m}$. This proves the necessity. Since every  DVR is a Dedekind domain, sufficiency follows by Corollary \ref{cor:cvroverDD}.

$(3)$ Sufficiency is clear. For the necessity, suppose that $M$ is completely virtually regular. Let us show that, $M$ is torsion. Assume that $M$ is not torsion. Then $M$ has a submodule $A$ such that $A \cong R$. Thus $R$ is virtually regular, and so $R$ is a PID, and so a DVR by Proposition \ref{prop:Rvirtuallyregular}(3). This contradict with our assumption that $R$ is not a DVR. Therefore $M$ is torsion. Now, by similar arguments as in the proof of $(2)$, we obtain that $M\cong (\frac{R}{Rp})^{m}.$
\end{proof}

\vspace{0.5cm}

\newpage

%
%
%
%
%
%
%
%
%
%
%
%
%
%
%
%



\begin{table}[!ht]
\caption{The following table summarize the structure of  finitely presented (strongly) virtually regular and completely virtually regular modules according to the  results   given in Theorem \ref{thm:fpvirtuallyregularovervd}, Proposition \ref{prop:svrovervd} and Proposition \ref{prop:cvrovervaluationdomains} for a valuation domain $R$ with the unique maximal ideal $P$.}
\renewcommand{\arraystretch}{2.5} 
\setlength{\tabcolsep}{1em} 

\renewcommand{\arraystretch}{2.5} 
\setlength{\tabcolsep}{1em} 

\renewcommand{\arraystretch}{2} 
\setlength{\tabcolsep}{1em} 

	\centering
\begin{tabular}{|m{12 em} | c |m{8 em}|c|c|c|}
	\cline{2-4} \multicolumn{1}{c|}{} & $P$ is not principal& \multicolumn{2}{|c|}{  $P = Rp$ is principal} \\ [10pt] \hline 
	\vfill  \it{Virtually regular} \vfill & $R^n$ & \multicolumn{2}{|c|}{${\displaystyle R^n \oplus \left(\frac{R}{Rp}\right)^{n_1}  \oplus \cdots \oplus \left(\frac{R}{Rp^k}\right)^{n_k}}$} \\  \hline 
	\vfill  \it{Strongly virtually regular} \vfill & $R^n$ & \multicolumn{2}{|c|}{${\displaystyle R^n \oplus \left(\frac{R}{Rp}\right)^{m}}$}  \\   \hline
	\vfill \multirow{2}*{\it Completely virtually regular}   &  & \   $R$ is a DVR &  $R$ is not a DVR \\   \cline{3-4}  
	& 0 & \ \ \ $ {\displaystyle R^n \oplus  \left(\frac{R}{Rp}\right)^{m}}$  &    $ {\displaystyle \left(\frac{R}{Rp}\right)^{m}}$ \\ [8pt]  \hline 

\end{tabular}


\end{table}


\section{Abelian Groups}

In this section, we characterize finitely generated virtually regular and completely virtually regular modules over the ring of integers. 
Recall that, for a prime integer $p$, an abelian group $G$ is said to be \textit{$p$-group} if the order of every element of $G$ is a power of $p$.

\begin{prop}\label{finiteabelcvirtually}
A nonzero finite abelian $p$-group $M$ is virtually regular if and only if $$M \cong (\mathbb{Z}_{p})^{a_1} \oplus (\mathbb{Z}_{p^2})^{a_2} \oplus \cdots \oplus (\mathbb{Z}_{p^k})^{a_k}$$ for some positive integers $a_1, a_2,\cdots, a_k$.
\end{prop}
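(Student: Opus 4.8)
The plan is to prove both implications using the fundamental theorem of finite abelian groups together with the Krull--Schmidt theorem. By the structure theorem, any such $M$ decomposes as $M \cong (\mathbb{Z}_{p})^{a_1} \oplus (\mathbb{Z}_{p^2})^{a_2} \oplus \cdots \oplus (\mathbb{Z}_{p^k})^{a_k}$ with $a_k \geq 1$ and all $a_i$ nonnegative; the content of the statement is precisely that virtual regularity is equivalent to \emph{all} the exponents $a_1, \ldots, a_k$ being strictly positive, i.e. that no cyclic factor $\mathbb{Z}_{p^i}$ with $1 \leq i \leq k$ is missing from the decomposition.

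For the sufficiency I would argue directly, this being the finite analogue of Lemma \ref{lem:ppowersarevr}. Assume all $a_i \geq 1$ and let $A$ be a nonzero cyclic submodule of $M$. Since $M$ is a $p$-group, $A$ is a finite cyclic $p$-group, so $A \cong \mathbb{Z}_{p^t}$ where $p^t$ is the order of a generator of $A$. As $A \subseteq M$ and the exponent of $M$ equals $p^k$, we get $t \leq k$. By hypothesis $a_t \geq 1$, so $M$ has a direct summand isomorphic to $\mathbb{Z}_{p^t} \cong A$. Hence every cyclic submodule of $M$ is isomorphic to a direct summand, and $M$ is virtually regular.

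For the necessity, suppose $M$ is virtually regular and fix $i$ with $1 \leq i \leq k$. Using the $\mathbb{Z}_{p^k}$-summand, which is present since $a_k \geq 1$, let $x$ generate a cyclic summand of order $p^k$; then $p^{k-i}x$ has order $p^i$, so $\langle p^{k-i}x \rangle$ is a cyclic submodule of $M$ isomorphic to $\mathbb{Z}_{p^i}$. By virtual regularity this submodule is isomorphic to a direct summand of $M$, i.e. $M \cong \mathbb{Z}_{p^i} \oplus N$ for some $N$. Since $\mathbb{Z}_{p^i}$ is indecomposable with local endomorphism ring and $M$ has finite length, the Krull--Schmidt theorem forces $\mathbb{Z}_{p^i}$ to occur among the indecomposable summands in the canonical decomposition of $M$; that is, $a_i \geq 1$. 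As $i$ was arbitrary, every $a_i$ is positive, giving the asserted form.

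The one delicate point will be the Krull--Schmidt step in the necessity direction: one must pass from ``$\mathbb{Z}_{p^i}$ is isomorphic to \emph{some} direct summand of $M$'' to ``$\mathbb{Z}_{p^i}$ actually appears in the cyclic decomposition of $M$''. This is exactly where uniqueness of the decomposition into indecomposables is used, valid here because each $\mathbb{Z}_{p^j}$ is a module of finite length with local endomorphism ring. Everything else reduces to a direct computation with the orders of elements.
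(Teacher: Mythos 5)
Your proposal is correct and follows essentially the same route as the paper: sufficiency by noting every cyclic submodule is some $\mathbb{Z}_{p^t}$ with $t\leq k$ and matching it to a summand, and necessity by taking cyclic subgroups of orders $p,\,p^2,\ldots,p^k$ inside a top cyclic summand $\mathbb{Z}_{p^k}$ and invoking virtual regularity. The only difference is that you make explicit the Krull--Schmidt/uniqueness step (passing from ``isomorphic to some direct summand'' to ``appears in the canonical decomposition''), which the paper leaves implicit; this is a welcome clarification rather than a divergence.
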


\begin{proof}
Let $M$ be a nonzero finite abelian $p$-group. By the Fundamental Theorem of Finitely Generated Abelian Groups (see \cite{fuchs}), we have $$M \cong \mathbb{Z}_{p^{n_1}} \oplus \mathbb{Z}_{p^{n_2}}\oplus \cdots \oplus \mathbb{Z}_{p^{n_k}} $$ where $n_1<n_2<\cdots <n_k$ are positive integers. Now, suppose $M$ is virtually regular. Clearly, for each $1 \leq t \leq n_k$, $\mathbb{Z}_{p^{n_k}}$ has a cyclic subgroup isomorphic $\mathbb{Z}_{p^{t}}$. Thus, by the assumption, $M$ has a direct summand isomorphic to  $\mathbb{Z}_{p^{t}}$. Therefore $M$ has the desired form:  $$M \cong (\mathbb{Z}_{p})^{a_1} \oplus (\mathbb{Z}_{p^2})^{a_2} \oplus \cdots \oplus (\mathbb{Z}_{p^k})^{a_k},$$and this proves the necessity.
Conversely, let $A$ be a nonzero cyclic subgroup of $M$. Since $A$ is a finite cyclic $p$-subgroup, $A \cong \mathbb{Z}_{p^s}$ for $s \in \mathbb{Z}^{+}$. As $p^kM=0,$ we must have $1 \leq s \leq k$. Clearly, by the assumption, $M$ has a direct summand isomorphic to $\Z_{p^s}.$  Therefore $A$ is isomorphic to a direct summand of $M$. Hence $M$ is virtually regular.
\end{proof}

Let $\Omega $ be the set of prime integers. It is well-known that $T(M) = \bigoplus\limits_{p \in \Omega} T_{p}(M),$ where $T_{p}(M)$ is the $p$-primary component of $T(M)$. The following lemma is clear and well-known, we include it for completeness.
\begin{lemma}\label{abeliantorsionpart}
Let $M$ be an abelian group. If $M = \bigoplus_{p \in \Omega} T_{p}(M)$ and $A \subseteq M$, then $A = \bigoplus_{p \in \Omega} \big(A \cap T_{p}(M)\big)$.
\end{lemma}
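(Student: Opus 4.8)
The plan is to show that $A$ is itself a torsion group whose $p$-primary component is exactly $A \cap T_p(M)$, and then to read off the desired decomposition from the primary decomposition of $A$. The inclusion $\bigoplus_{p} (A \cap T_p(M)) \subseteq A$ together with the directness of this sum is immediate: the family $\{A \cap T_p(M)\}_{p \in \Omega}$ is a sub-family of $\{T_p(M)\}_{p \in \Omega}$, whose sum is direct by hypothesis, so the induced sum is automatically direct, and each summand visibly lies inside $A$. Hence the only real content is the reverse inclusion $A \subseteq \bigoplus_{p} (A \cap T_p(M))$, that is, the assertion that the primary components (computed in $M$) of an arbitrary element of $A$ again belong to $A$.

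To establish this, I would fix $a \in A$ and use $M = \bigoplus_{p} T_p(M)$ to write $a = a_{p_1} + \cdots + a_{p_n}$ with $a_{p_i} \in T_{p_i}(M)$ for distinct primes $p_1, \dots, p_n$, only finitely many components being nonzero. Letting $p_i^{e_i}$ denote the order of $a_{p_i}$ and setting $m_i = \prod_{j \neq i} p_j^{e_j}$, one has $m_i a_{p_j} = 0$ for $j \neq i$, whence $m_i a = m_i a_{p_i}$. Since $\gcd(m_i, p_i^{e_i}) = 1$, Bézout's identity supplies integers $u, v$ with $u m_i + v p_i^{e_i} = 1$; applying this relation to $a_{p_i}$ and using $p_i^{e_i} a_{p_i} = 0$ gives $a_{p_i} = u m_i a_{p_i} = u m_i a$. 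As $a \in A$, this forces $a_{p_i} \in A$, and therefore $a_{p_i} \in A \cap T_{p_i}(M)$.

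It then follows that $a = \sum_{i} a_{p_i} \in \bigoplus_{p} (A \cap T_p(M))$, which yields $A \subseteq \bigoplus_{p} (A \cap T_p(M))$ and, combined with the first paragraph, the claimed equality. The only step demanding any attention is the coprimality argument exhibiting each component $a_{p_i}$ as an integer multiple of $a$; the remainder is formal bookkeeping about sub-direct sums, so I expect no genuine obstacle here.
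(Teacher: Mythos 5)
Your argument is correct: the directness and containment of $\bigoplus_p (A \cap T_p(M))$ in $A$ are immediate, and the coprimality/B\'ezout step showing that each primary component $a_{p_i}$ of an element $a \in A$ equals an integer multiple $u m_i a$ of $a$ (hence lies in $A$) is exactly the standard argument. The paper gives no proof of this lemma, dismissing it as ``clear and well-known,'' so there is nothing to compare against; your write-up is the expected proof and has no gaps.
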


For torsion virtually regular modules, we have the following.
\begin{prop}\label{torsionabelTpm}
Let $M$ be a torsion abelian group. Then $M$ is virtually regular if and only if $T_{p}(M)$ is virtually regular for each prime $p$.
\end{prop}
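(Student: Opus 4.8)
The plan is to prove the two implications separately, exploiting the canonical primary decomposition $M = \bigoplus_{p \in \Omega} T_p(M)$ of a torsion abelian group together with Lemma \ref{abeliantorsionpart}, which tells us that \emph{every} submodule $A \subseteq M$ respects this decomposition, i.e. $A = \bigoplus_{p \in \Omega}(A \cap T_p(M))$. The sufficiency direction is the conceptually delicate one and the necessity direction is essentially free, so I would dispatch necessity first.

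For the necessity (if $M$ is virtually regular then each $T_p(M)$ is virtually regular), the key observation is that each primary component $T_p(M)$ is a direct summand of $M$: writing $M = T_p(M) \oplus \big(\bigoplus_{q \neq p} T_q(M)\big)$, there are no nonzero homomorphisms between the $p$-component and the complementary sum because their elements have coprime orders, so $\operatorname{Hom}_{\mathbb Z}\big(T_p(M), \bigoplus_{q \neq p} T_q(M)\big) = 0$. Hence this is a decomposition of the required form, and Proposition \ref{prop:mainproperties}(1) immediately yields that $T_p(M)$ is virtually regular for each $p$.

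For the sufficiency, assume each $T_p(M)$ is virtually regular and let $C$ be a cyclic submodule of $M$, say $C = xR$. I would decompose $x$ according to its primary parts: since $C$ is a finite (hence torsion) cyclic group, by Lemma \ref{abeliantorsionpart} we get $C = \bigoplus_{p \in \Omega}(C \cap T_p(M))$, and this sum is finite because $x$ has finite order divisible by only finitely many primes. Each $C_p := C \cap T_p(M)$ is a cyclic $p$-subgroup sitting inside $T_p(M)$ (it is a homomorphic image, namely a primary component, of the cyclic group $C$, hence cyclic). By the virtual regularity of $T_p(M)$, there is a decomposition $T_p(M) = B_p \oplus D_p$ with $C_p \cong B_p$. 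The plan is then to assemble these: set $B = \bigoplus_p B_p$, which is a direct summand of $\bigoplus_p T_p(M) = M$ with complement $\bigoplus_p D_p$, and observe that $C = \bigoplus_p C_p \cong \bigoplus_p B_p = B$. Thus $C$ is isomorphic to a direct summand of $M$, proving $M$ is virtually regular.

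The main obstacle to watch is the finiteness and cyclicity bookkeeping in the sufficiency argument: one must verify that only finitely many $C_p$ are nonzero (so that the direct sum $B$ really is a summand assembled from finitely many pieces, and the isomorphism $C \cong B$ is legitimate), and that each $C_p$ is genuinely cyclic so that the hypothesis on $T_p(M)$ applies. Both follow from $C$ being a finite cyclic group via the Chinese Remainder decomposition $\mathbb{Z}_n \cong \bigoplus_{p \mid n} \mathbb{Z}_{p^{v_p(n)}}$, but this is the step where the argument must be stated carefully rather than waved through.
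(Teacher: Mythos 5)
Your proof is correct and follows essentially the same route as the paper: the sufficiency direction (decompose the cyclic subgroup $C$ into its finitely many primary components $C\cap T_p(M)$, lift each to a summand of the corresponding $T_p(M)$, and assemble the summand of $M=\bigoplus_p T_p(M)$) is the paper's argument almost verbatim. The only variation is in the necessity direction, where you invoke Proposition \ref{prop:mainproperties}(1) via the vanishing of $\Hom$ between distinct primary components, whereas the paper argues directly that a summand $B\cong A$ of $M$ with $A$ a $p$-group must lie in $T_p(M)$ and then applies the modular law; both arguments are valid.
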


\begin{proof}
To prove the necessity, suppose $M$ is virtually regular. Let  $A$ be a cyclic subgroup of $T_{p}(M)$, then $A \cong B \ds M$ by the assumption.  Let $M=B\oplus C$ for some $C$ of $M$. Since $A$ is a $p$-group, $B$ is also a $p$-group. Then by the modular law, we have $T_p(M)= B \oplus [T_p(M)\cap C]$. Thus, $A$ is isomorphic to the direct summand $B$ of $T_p(M)$. Therefore $T_p(M)$ is virtually regular. For the sufficiency, assume that $B$ is a cyclic subgroup of $M$.  Since $B$ is cyclic and torsion, $B=T_{p_1}(B)\oplus \cdots \oplus T_{p_n}(B)$ for some prime integers $p_1,\cdots ,p_n$. Now, $T_{p_i}(B)$ is a cyclic subgroup of $T_{p_i}(M)$ for each $i=1,\cdots ,n$. As $T_{p_i}(M)$ is virtually regular, $T_{p_i}(B) \cong A_i \ds T_{p_i}(M)$. Since $M=\oplus_{p\in \Omega}T_p(M)$, we have $A_1 \oplus \cdots \oplus A_n \ds M.$ Therefore $B\cong A_1 \oplus \cdots \oplus A_n \ds M$, and so $M$ is virtually regular.
\end{proof}

\begin{thm}\label{cvfag}
Let $M$ be a finitely generated abelian group. Then the following are equivalent.
 \begin{enumerate}
     \item $M$ is virtually regular.
     \item $T(M)$ is virtually regular.
     \item $T_{p}(M)$ is virtually regular for each prime $p$.
     \item $T_{p}(M) \cong (\mathbb{Z}_{p})^{a_1} \bigoplus (\mathbb{Z}_{p^2})^{a_2} \bigoplus \cdots \bigoplus (\mathbb{Z}_{p^k})^{a_k}$ for some positive integers $a_1, a_2,\cdots, a_k$ and each prime $p$.
 \end{enumerate}
\end{thm}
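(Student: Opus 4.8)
The plan is to prove the cycle of implications $(1)\Rightarrow(2)\Rightarrow(3)\Rightarrow(4)\Rightarrow(1)$, drawing on the torsion machinery already established. For $(1)\Rightarrow(2)$, I would invoke Proposition~\ref{prop:Z(M)virtuallyregular} together with the observation that over $\Z$ the singular and torsion submodules coincide; since $M$ is virtually regular, $Z(M)=T(M)$ is virtually regular. (Alternatively this is immediate from Corollary~\ref{cor:abeltorsiontorsionfree}, but the singular-submodule route is cleaner and does not require discussing $M/T(M)$.) The implication $(2)\Rightarrow(3)$ is exactly Proposition~\ref{torsionabelTpm} applied to the torsion group $T(M)$, whose primary decomposition is $T(M)=\bigoplus_{p\in\Omega}T_p(M)$. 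The implication $(3)\Rightarrow(4)$ follows from Proposition~\ref{finiteabelcvirtually}: here I would use that $M$ is \emph{finitely generated}, so each $T_p(M)$ is a finite abelian $p$-group (indeed $T(M)$ is finite), and then Proposition~\ref{finiteabelcvirtually} gives the stated form for each nonzero $T_p(M)$; for the primes $p$ with $T_p(M)=0$ the statement holds vacuously with all $a_i=0$.

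The remaining and most substantive step is $(4)\Rightarrow(1)$. First I would note that condition $(4)$, combined with Proposition~\ref{finiteabelcvirtually}, shows each $T_p(M)$ is virtually regular, hence by Proposition~\ref{torsionabelTpm} the torsion part $T(M)$ is virtually regular. The task is then to upgrade virtual regularity of $T(M)$ to virtual regularity of all of $M$. Since $M$ is finitely generated abelian, the Fundamental Theorem gives $M\cong \Z^{n}\oplus T(M)$ for some nonnegative integer $n$, so $M/T(M)\cong \Z^n$ is free, hence virtually regular by Proposition~\ref{torsion-freecvssoverdomain} (its direct summand $\Z\cong R$ witnesses this when $n\geq 1$; when $n=0$ we have $M=T(M)$ and there is nothing further to prove). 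Now both $T(M)$ and $M/T(M)$ are virtually regular, so Corollary~\ref{cor:abeltorsiontorsionfree} yields that $M$ is virtually regular, completing the cycle.

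The main obstacle to watch is the case analysis on $n$ in the last step: when $M$ is torsion ($n=0$) one must not appeal to a nonexistent free summand, and when $n\geq 1$ one should make explicit that the splitting $M\cong\Z^n\oplus T(M)$ furnishes a direct summand isomorphic to $\Z$, which is what Proposition~\ref{torsion-freecvssoverdomain} requires for the torsion-free quotient. Apart from this, all the individual implications are direct citations of earlier results, so the proof is essentially an assembly argument; the only genuine content beyond bookkeeping is ensuring the finite-generation hypothesis is used precisely where Proposition~\ref{finiteabelcvirtually} demands a \emph{finite} $p$-group, which holds because a finitely generated torsion abelian group is finite.
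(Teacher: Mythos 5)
Your proof is correct and follows essentially the same route as the paper: the paper also establishes $(1)\Leftrightarrow(2)$ via the splitting $M\cong\Z^n\oplus T(M)$ together with Corollary~\ref{cor:abeltorsiontorsionfree}, and cites Propositions~\ref{torsionabelTpm} and~\ref{finiteabelcvirtually} for the remaining equivalences. Your alternative derivation of $(1)\Rightarrow(2)$ through $Z(M)=T(M)$ and Proposition~\ref{prop:Z(M)virtuallyregular} is a valid minor variant, not a genuinely different argument.
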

\begin{proof}

$(1) \iff (2)$ Since $M$ is finitely generated, $M \cong \Z^n \oplus T(M)$ for some nonnegative integer $n$.  $\Z^n$ is virtually regular by Proposition \ref{torsion-freecvssoverdomain}. Now the proof follows by  Corollary \ref{cor:abeltorsiontorsionfree}.




$(2) \iff (3)$ By Proposition \ref{torsionabelTpm}.

$(3) \iff (4)$ By Proposition \ref{finiteabelcvirtually}.
\end{proof}

Over the ring of integers, finitely generated virtually regular modules need not be virtually semisimple. For example, $\Z_2\oplus \Z_4$ is virtualy regular but it is not virtually semisimple. For cyclic abelian groups, we have the following.

\begin{lemma} Let $M$ be a cyclic abelian group. Then the following statements are equivalent.
\begin{enumerate}
  \item $M$ is virtually regular.
  \item $M$ is virtually semisimple.
  \item $M\cong \Z$ or $ M\cong \bigoplus_{i=1}^n \Z_{p_i}$, where $p_1, \cdots , p_n$ are distinct primes.
\end{enumerate}
\end{lemma}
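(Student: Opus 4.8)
The plan is to prove the cycle $(2)\Rightarrow(1)\Rightarrow(3)\Rightarrow(2)$, with essentially all of the content concentrated in $(1)\Rightarrow(3)$. The implication $(2)\Rightarrow(1)$ is immediate, since every virtually semisimple module is virtually regular, as recorded in the examples following the definitions.

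For $(1)\Rightarrow(3)$, I first dispose of the torsion-free case: a cyclic abelian group is either $\Z$ or finite. If $M\cong\Z$ we are done, so assume $M$ is a nonzero finite cyclic group, say $M\cong\Z_m$ with $m\geq 2$. Writing $m=p_1^{k_1}\cdots p_n^{k_n}$, the Chinese Remainder Theorem gives $M\cong\bigoplus_{i=1}^n\Z_{p_i^{k_i}}$, so that each $p$-primary component $T_{p_i}(M)\cong\Z_{p_i^{k_i}}$ is itself cyclic, hence indecomposable. By Theorem \ref{cvfag}, the virtual regularity of $M$ forces each $T_{p_i}(M)$ to have the form $(\Z_{p_i})^{a_1}\oplus\cdots\oplus(\Z_{p_i^k})^{a_k}$ with all the $a_j$ positive. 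An indecomposable module of this shape can consist of only a single simple summand, which pins down $k=1$ and $a_1=1$; thus $k_i=1$ for every $i$, and $M\cong\bigoplus_{i=1}^n\Z_{p_i}$ with the $p_i$ distinct primes.

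For $(3)\Rightarrow(2)$, I treat the two cases separately. If $M\cong\Z$, then every nonzero submodule has the form $n\Z\cong\Z=M$, which is trivially a direct summand of $M$, so $M$ is virtually semisimple. If $M\cong\bigoplus_{i=1}^n\Z_{p_i}$, then $M$ is semisimple, and every submodule of a semisimple module is a direct summand; in particular every submodule is isomorphic to a direct summand, so $M$ is virtually semisimple.

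The only step demanding care is $(1)\Rightarrow(3)$, and the crux there is the simple but decisive observation that a cyclic $p$-group is indecomposable. Combined with the structure result of Proposition \ref{finiteabelcvirtually}, this at once rules out any prime power $p^{k}$ with $k\geq 2$ dividing the order of $M$, forcing the squarefree conclusion.
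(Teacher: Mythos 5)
Your proof is correct and follows essentially the same route as the paper: the paper also derives $(1)\Rightarrow(3)$ from Theorem \ref{cvfag} (equivalently Proposition \ref{finiteabelcvirtually}) and treats $(3)\Rightarrow(2)\Rightarrow(1)$ as immediate; you have merely spelled out the indecomposability argument that the paper leaves implicit.
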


\begin{proof} $(1) \Rightarrow (3)$ is clear by  Theorem \ref{cvfag}.

$(3) \Rightarrow (2)\Rightarrow (1)$  are  clear.

\end{proof}

\begin{corollary}\label{corollary:vss-cvr-svr} Let $M$ be a finitely generated abelian group. The following statements are equivalent.
\begin{enumerate}
  \item $M$ is virtually semisimple.
  \item $M$ is strongly virtually regular.
  \item $M$ is completely virtually regular.
  \item $T(M)$ is semisimple.
 \end{enumerate}
\end{corollary}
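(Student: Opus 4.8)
The plan is to prove the cycle of equivalences for a finitely generated abelian group $M$, leaning heavily on the decomposition $M \cong \Z^n \oplus T(M)$ and on the results already established for valuation domains and abelian groups. First I would establish $(4) \Rightarrow (1)$, which is the implication that actually forces $n = 0$. If $T(M)$ is semisimple, then each $p$-primary component $T_p(M)$ is a direct sum of copies of $\Z_p$; by Theorem \ref{cvfag} this already makes $M$ virtually regular, but I also want the stronger conclusion that $M$ is virtually semisimple. The key observation is that a nonzero free part would obstruct the stronger notions: if $n \geq 1$, then $M$ contains a non-free rank-one torsion-free subgroup (as in the example $M = \Q \oplus \Z^{(\N)}$ earlier, the subgroup generated by $\{1/2^k\}$) that embeds but is not isomorphic to any direct summand. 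Hence semisimplicity of $T(M)$ should be paired with the forcing of $n=0$; I would argue that when $T(M)$ is semisimple and $M$ is to satisfy the strong properties, $M$ must coincide with its torsion part, i.e.\ $M \cong \bigoplus_{i=1}^r \Z_{p_i}$, a finite semisimple module.

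Next I would run the forward implications $(1)\Rightarrow(4)$-type arguments through the structure theorem. The cleanest route is to show each of virtual semisimplicity, strong virtual regularity, and complete virtual regularity forces $T(M)$ to be semisimple and forces $n=0$. For this I would invoke Proposition \ref{finiteabelcvirtually}: if $M$ is virtually regular then each $T_p(M) \cong (\Z_p)^{a_1}\oplus\cdots\oplus(\Z_{p^k})^{a_k}$. The extra strength of (2) or (3) rules out any cyclic summand $\Z_{p^t}$ with $t \geq 2$: for (3), completely virtually regular, one restricts to the submodule $\Z_{p^t}$ itself, which is indecomposable and would have to be a PID quotient $\frac{R}{Q}$, impossible unless $t=1$ by Proposition \ref{lem:cyclicvirtuallyregular} and Lemma \ref{lemma:primeidealprincipal}; for (2), strong virtual regularity, one uses the socle obstruction exactly as in the proof of Proposition \ref{prop:svrovervd}(2), namely that $\Soc(M)$ is finitely generated but $M$ has no summand isomorphic to it when some $k \geq 2$. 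Either way every $T_p(M)$ collapses to a semisimple group, giving (4).

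To handle the free part under (2) and (3), I would use Proposition \ref{prop:svrudim} together with uniform dimension. A finitely generated abelian group has finite uniform dimension, so a strongly virtually regular such $M$ is finitely generated (which it already is), and more to the point, any rank-one free summand sits inside a copy of $\Q$ embedded via the divisible hull, producing a non-principal subgroup that witnesses failure of the strong conditions, just as in the distinguishing example $M=\Q\oplus\Z$. Alternatively, and more simply, if $n\geq 1$ then $\Z \ds M$ and the subgroup $\sum_k \Z\cdot\frac{1}{2^k}$ of the divisible hull of that summand embeds in a virtually semisimple or completely virtually regular $M$ without being isomorphic to a summand, contradicting the definition. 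This pins $n=0$, so $M$ is a finite group, and then semisimplicity of $T(M)=M$ closes the loop back to (1) since finite semisimple modules are virtually semisimple.

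The main obstacle I anticipate is the implication that distinguishes the three strong notions from mere virtual regularity, i.e.\ correctly forcing both $n = 0$ and the exponent-one condition simultaneously from each of (1), (2), (3). The subtlety is that virtual regularity alone permits $\Z_{p^2}$ summands and free summands, so I must carefully identify the obstructing submodule in each case and verify it genuinely embeds in $M$ while failing to be isomorphic to a direct summand. Once the socle and the embedded non-free torsion-free subgroup obstructions are in place, the equivalences $(1)\Leftrightarrow(2)\Leftrightarrow(3)\Leftrightarrow(4)$ follow by assembling these pieces, with (4) serving as the common normal form $M \cong \bigoplus_{i=1}^{r}\Z_{p_i}$ for distinct or repeated primes that is manifestly virtually semisimple.
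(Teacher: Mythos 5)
Your treatment of the torsion part is sound: ruling out cyclic summands $\Z_{p^t}$ with $t\ge 2$ via the socle obstruction for strong virtual regularity, and via Proposition \ref{lem:cyclicvirtuallyregular} applied to the indecomposable submodule $\Z_{p^t}$ for complete virtual regularity, matches the machinery the paper uses. But the central claim of your argument --- that each of (1), (2), (3) forces the free rank $n$ to be zero, so that the common normal form is $\bigoplus_{i=1}^{r}\Z_{p_i}$ --- is false, and the witness you propose does not exist. The subgroup $\sum_{k}\Z\cdot\frac{1}{2^k}\cong\Z[1/2]$ lives in the divisible hull of $M$, not in $M$ itself: since $\Z$ is Noetherian, every subgroup of a finitely generated abelian group is finitely generated, hence a finite direct sum of cyclic groups, and no non-free rank-one torsion-free subgroup can embed in $M$. (The example $\Q\oplus\Z^{(\N)}$ in the paper is not finitely generated, which is exactly why that obstruction is available there.) In fact $M\cong\Z^n\oplus S$ with $S$ finite semisimple \emph{is} virtually semisimple, strongly virtually regular and completely virtually regular for every $n$: any subgroup $N$ splits as $\Z^k\oplus T(N)$ with $k\le n$ and with $T(N)$ isomorphic to a direct summand of $S$, so $N$ is isomorphic to a direct summand of $M$. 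Consequently your proof of $(4)\Rightarrow(1)$, which tries to force $n=0$, and your concluding normal form are both wrong.

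For comparison, the paper dispatches the corollary by quoting three earlier results: $(1)\Leftrightarrow(2)$ because $M$ is Noetherian, $(1)\Leftrightarrow(4)$ from the structure theorem for virtually semisimple modules over commutative rings together with the fact that every nonzero prime of $\Z$ is maximal, and $(3)\Leftrightarrow(4)$ from Corollary \ref{cor:cvroverDD}, whose condition $(ii)$ is automatic here since a finitely generated subgroup $N$ with $N\neq T(N)$ always has a free direct summand. If you delete the $n=0$ step and instead verify directly that $\Z^n\oplus S$ satisfies all three strong conditions, the remainder of your outline goes through.
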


\begin{proof} $(1) \iff (2)$ For a Noetherian module the notions of virtually semisimple and strongly virtually regular are coincide. 

$(1) \iff (4)$ By \cite[Theorem 2.3]{svss}, and the fact that every nonzero prime ideal of $\Z$ is a maximal ideal.

$(3) \iff (4)$ By Corollary \ref{cor:cvroverDD}.

\end{proof}





\section*{Acknowledgement}
The authors would like to thank Prof. François Couchot for pointing out Corollary \ref{cor:propercyclicpureinjective}.

\end{document}